\documentclass[11pt]{amsart}
\usepackage{amsmath,amssymb,amsfonts,latexsym,graphicx,amsthm,mathdots}
\usepackage{hyperref}
\parindent=16pt
\textheight=20cm \textwidth=14cm
\parskip=0mm

\oddsidemargin30pt \evensidemargin30pt

\theoremstyle{plain}
\newtheorem{theorem}{Theorem}[section]
\newtheorem{lemma}[theorem]{Lemma}
\newtheorem{proposition}[theorem]{Proposition}

\theoremstyle{definition}
\newtheorem{conjecture}[theorem]{Conjecture}
\newtheorem{notation}[theorem]{Notation}

\newtheorem{question}[theorem]{Question}
\numberwithin{equation}{section}

\begin{document}

\title{Cyclotomic difference sets in finite fields}

\thanks{MSC2010: 05B10, 05B25, 11T22, 11T24, 65H10.
Keywords: difference set, Gauss sum, Jacobi sum, finite projective plane, discrete Fourier transform.
This work was partially supported by NSFC grant (11501011).}

\author{Binzhou Xia}
\address{School of Mathematics and Statistics\\University of Western Australia\\ Crawley 6009, WA\\ Australia}
\email{binzhouxia@pku.edu.cn}
\maketitle

\begin{abstract}
The classical problem of whether $m$th-powers with or without zero in a finite field $\mathbb{F}_q$ form a difference set has been extensively studied, and is related to many topics, such as flag transitive finite projective planes. In this paper new necessary and sufficient conditions are established including those via a system of polynomial equations on Gauss sums. The author thereby solves the problem for even $q$ which is neglected in the literature, and extends the nonexistence list for even $m$ up to $22$. Moreover, conjectures toward the complete classification are posed.
\end{abstract}

\section{Introduction}

A subset $D=\{a_1,\ldots,a_k\}$ in a group $F$ of order $v$ is said to be a \emph{$(v,k,\lambda)$-difference set} or simply a \emph{difference set} if for each nonidentity $a\in F$ there are exactly $\lambda$ ordered pairs $(a_s,a_t)\in D\times D$ such that $a_sa_t^{-1}=a$. Given a $(v,k,\lambda)$-difference set, we obtain instantly by simple counting that
\begin{equation}\label{11}
k(k-1)=\lambda(v-1).
\end{equation}
It is straightforward to check that any subset of a group $F$ with size $0$, $1$, $|F|-1$ or $|F|$ is a $(|F|,k,\lambda)$-difference set with $k-\lambda=0$ or $1$. Conversely, if $k-\lambda=0$ or $1$, then (\ref{11}) implies $k=0$, $1$, $v-1$ or $v$. For a $(v,k,\lambda)$-difference set $D$, we call the nonnegative integer $n:=k-\lambda$ the \emph{order} of $D$, and say $D$ is \emph{trivial} if $n\leqslant1$ and \emph{nontrivial} if $n>1$.

For comprehensive surveys on difference sets, the reader is referred to \cite[18 Part VI]{colbourn2010handbook} and \cite{jungnickel1992difference}. In this paper we focus on the case when $F$ is the additive group of a finite field and the nonzero elements in $D$ form a multiplicative subgroup of $F\setminus\{0\}$.

\begin{notation}
Let $q=mf+1$ be a power of a prime number $p$ with $m,f\in\mathbb{Z}_{>0}$. Denote the set of nonzero $m$th-powers in $\mathbb{F}_q$ by $H_{q,m}$ and $M_{q,m}:=H_{q,m}\cup\{0\}$.
\end{notation}

If $H_{q,m}$ is a $(q,f,\lambda)$-difference set in $\mathbb{F}_q^+$, then it is called an \emph{$m$th-cyclotomic difference set} or \emph{$m$th-power residue difference set}. If $M_{q,m}$ is a $(q,f+1,\lambda)$-difference set in $\mathbb{F}_q^+$, then it is called a \emph{modified $m$th-cyclotomic difference set} or \emph{modified $m$th-power residue difference set}. When not specifying the parameters, we will simply call them \emph{cyclotomic difference set} or \emph{modified cyclotomic difference set}, respectively. In view of (\ref{11}), a necessary condition for the cyclotomic $(q,f,\lambda)$-difference set is
\begin{equation}\label{12}
f-1=\lambda m,
\end{equation}
while a necessary condition for the modified cyclotomic $(q,f+1,\lambda)$-difference set is
\begin{equation}\label{13}
f+1=\lambda m.
\end{equation}

Research on cyclotomic and modified cyclotomic difference sets dates back to Paley in 1930s' \cite{paley1933orthogonal} when he used the quadratic residues in a finite filed to construct Hadamard matrices. Essentially, he proved that $H_{q,2}$ is a $(q,f,(q-3)/4)$-difference set under the condition $q\equiv3\pmod{4}$, which is no further restriction than (\ref{12}). Based on this result, it is clear to see that $M_{q,2}$ is a $(q,f+1,(q+1)/4)$-difference set under the condition $q\equiv3\pmod{4}$, which is no further restriction than (\ref{13}). About ten years after Paley's construction, Chowla \cite{chowla1944property} discovered a family of nontrivial quartic cyclotomic difference sets in $\mathbb{F}_p$ by showing that $H_{p,4}$ is a difference set when $p=1+4t^2$ for some odd integer $t$.

In 1953, Lehmer published a paper \cite{lehmer1953residue} investigating cyclotomic and modified cyclotomic difference sets in $\mathbb{F}_p$, where she established necessary and sufficient conditions for their existence via cyclotomic numbers and applied these to get fruitful results (see \cite[Chapter 2]{berndt1998gauss} for an introduction to cyclotomic numbers). She proved that neither $H_{p,m}$ nor $M_{p,m}$ is a nontrivial difference set in $\mathbb{F}_p$ with odd $m$, and determined all the nontrivial $m$th-cyclotomic and modified $m$th-cyclotomic difference sets in $\mathbb{F}_p$ for $4\leqslant m\leqslant8$: they are
\begin{align}
&\text{$H_{p,4}$ with $p=1+4t^2$ for some odd integer $t$,}\label{32}\\
&\text{$H_{p,8}$ with $p=1+8u^2=9+64v^2$ for some integers $u$ and $v$,}\label{33}\\
&\text{$M_{p,4}$ with $p=9+4t^2$ for some odd integer $t$,}\label{34}\\
&\text{$M_{p,8}$ with $p=49+8u^2=441+64v^2$ for some integers $u$ and $v$.}\label{35}
\end{align}
Note that the Pell equation $u^2-8v^2=1$ coming from (\ref{33}) forces $u$ and $v$ to be odd in order that $1+8u^2$ is prime (\cite[page 429]{lehmer1953residue}), while $u$ is odd and $v$ is even in (\ref{35}) for a similar reason (\cite[page 432]{lehmer1953residue}). On the other hand, it is not known yet whether there exist infinitely many primes $p$ as in (\ref{33}) or (\ref{35}), although Lehmer noticed that they are quite rare by computation results.

Since Lehmer's significant paper, cyclotomic numbers have been the main tool for studying existence of cyclotomic and modified cyclotomic difference sets. The criterion in terms of cyclotomic numbers established by Lehmer originally in $\mathbb{F}_p$ extends to general finite fields $\mathbb{F}_q$ in the same form, and then it is shown that, if $q$ is odd, the general finite field case does not give more examples than (\ref{32})--(\ref{35}) for $m$ odd or $4\leqslant m\leqslant8$ \cite{hall1965characters,storer1967cyclotomy}. As for the case when $q=p$, more nonexistence results on $m$th-cyclotomic and modified cyclotomic difference sets have been proved. They are now known to be nonexistent for $m=10$ \cite{MR0113851}, $12$ \cite{whiteman1960cyclotomic}, $14$ \cite{muskat1966cyclotomic}, $16$ \cite{evans1980bioctic}, $18$ \cite{baumert1967cyclotomic} and $20$ \cite{evans1999nonexistence}. Although widely believed that for any larger even $m$ neither $H_{p,m}$ nor $M_{p,m}$ forms a difference set, it has only been proved for some values of $m$ under extra condition: $m\equiv6\pmod{8}$ with $4\in H_{p,m}$ \cite{muskat1966cyclotomic}, and $m=24$ with $2\in H_{p,3}$ or $3\in H_{p,4}$ \cite{evans1983twenty}. One of the difficulties for higher powers is to evaluate the cyclotomic numbers, and also the work is quite laborious when $m$ grows.

In this paper, we alternate the approach by considering the relations that the corresponding Gauss sums must satisfy, which leads to overdetermined systems of polynomial equations. This enables us to determine $m$th-cyclotomic and modified cyclotomic difference sets in $\mathbb{F}_q$ up to $m=22$ including even $q$, and gives insight for general $m$.

There are definite links between difference sets and other structures (see for example \cite{baumert1971cyclic}). A remarkable one is that to finite projective planes, which we will illustrate in Subsection~\ref{sec5}. Physical applications of difference sets can be found in the references listed in \cite{byard2011lam}.

The layout of this paper is as follows. After this introduction, we will have in Section~\ref{sec3} preparing results on multiplicative characters and Gauss and Jacobi sums as well as discrete Fourier transform techniques which will be utilized in subsequent sections. In Section~\ref{sec2}, we establish new necessary and sufficient conditions for the existence of cyclotomic and modified cyclotomic difference sets in finite fields via multiplicative characters, Jacobi sums and Gauss sums, respectively. Based on these, we obtain results on the existence problem of $m$th-cyclotomic and modified cyclotomic difference sets in Section~\ref{sec4}. It is proved that the only existing one for odd $m$ is the modified $3$rd-cyclotomic difference set in $\mathbb{F}_{16}$ (see Theorem \ref{t8}), while the existence for even $m$ implies restrictions on the solutions of certain system of polynomial equations. In the final section, we discuss the computation results for these systems, which yields the determination of existence up to $m=22$ (see Theorem \ref{t11}) and suggests conjectures toward the complete classification.

\section{Preliminaries}\label{sec3}

First of all, we set up some notation.

\begin{notation}
Let $\mathbb{S}^1=\{z\in\mathbb{C}\mid|z|=1\}$ be the unit cycle on the complex plane. Denote $\zeta_r=e^{2\pi i/r}$ for any $r\in\mathbb{Z}_{>0}$. For a field $F$, let $F^*=F\setminus\{0\}$ be the set of nonzero elements in $F$, which constitutes a group under multiplication of the field.
\end{notation}

\subsection{Multiplicative characters}

Let $\chi$ be a character of the multiplicative group $\mathbb{F}_q^*$, i.e., a group homomorphism from $\mathbb{F}_q^*$ to $\mathbb{C}^*$. For any $s\in\mathbb{Z}$, the map $\chi^s$ from $\mathbb{F}_q^*$ to $\mathbb{C}^*$ defined by $\chi^s(\alpha)=(\chi(\alpha))^s$ for $\alpha\in\mathbb{F}_q^*$ is also a character of $\mathbb{F}_q^*$. Extend the domain of $\chi$ to $\mathbb{F}_q$ by setting
$$
\chi(0)=
\begin{cases}
1,\quad\text{if $\chi$ is trivial,}\\
0,\quad\text{if $\chi$ is nontrivial}
\end{cases}
$$
and call $\chi$ a \emph{multiplicative character} on $\mathbb{F}_q$. For any $s\in\mathbb{Z}$, the character $\chi^s$ of $\mathbb{F}_q^*$ is also extended to a multiplicative character on $\mathbb{F}_q$, and by $\chi^s(\alpha)$ we mean the image of $\alpha\in\mathbb{F}_q$ under $\chi^s$ rather than $(\chi(\alpha))^s$. Note that the equality $\chi^s(\alpha)=(\chi(\alpha))^s$ may not hold after $\chi^s$ is extended to a multiplicative character on $\mathbb{F}_q$. For example, if $\chi$ is nontrivial and $s$ is a positive integer such that $\chi^s$ is trivial, then $\chi^s(0)=1\neq0=(\chi(0))^s$.

Through this section, we will evaluate some multiplicative character sums, which turns out to play a central role in the subsequent section. For any nontrivial multiplicative character $\chi$ on $\mathbb{F}_q$, it is well known (see for example~\cite[Page~9]{berndt1998gauss}) that
\begin{equation}\label{15}
\sum\limits_{\alpha\in\mathbb{F}_q}\chi(\alpha)=0
\end{equation}

\begin{lemma}\label{l3}
Let $\chi$ be a multiplicative character of order $m$ on $\mathbb{F}_q$ and $s\in\mathbb{Z}$. Then
$$
\sum\limits_{\beta,\gamma\in H_{q,m}}\chi^s(\beta-\gamma)=f\sum\limits_{\alpha\in H_{q,m}}\chi^s(1-\alpha).
$$
\end{lemma}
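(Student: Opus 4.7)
The plan is to reduce the double sum on the left-hand side to the single sum on the right-hand side via a multiplicative change of variables, exploiting the fact that $H_{q,m}$ is a multiplicative subgroup of $\mathbb{F}_q^*$ of order $f$.

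Concretely, I would fix $\beta\in H_{q,m}$ in the outer sum and, for each such $\beta$, re-parameterise the inner sum by letting $\alpha=\gamma/\beta$. Because $H_{q,m}$ is a group, as $\gamma$ ranges over $H_{q,m}$ so does $\alpha$. With this substitution $\beta-\gamma=\beta(1-\alpha)$, so the problem reduces to verifying that
$$
\chi_m^s\bigl(\beta(1-\alpha)\bigr)=\chi_m^s(1-\alpha)
$$
for every $\beta\in H_{q,m}$ and every $\alpha\in H_{q,m}$. Once this identity is in hand, summing over $\alpha$ and then over $\beta$ immediately produces the factor $|H_{q,m}|=f$ in front of the single sum.

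The identity itself splits into two cases. When $\alpha\neq 1$, both $\beta$ and $1-\alpha$ are nonzero, so multiplicativity of $\chi_m^s$ yields $\chi_m^s(\beta(1-\alpha))=\chi_m^s(\beta)\chi_m^s(1-\alpha)$; the factor $\chi_m^s(\beta)$ equals $1$ because $\beta\in H_{q,m}$ is an $m$th power $\delta^m$ and $\chi_m$ has order $m$, forcing $\chi_m^s(\beta)=\chi_m(\delta)^{sm}=1$. When $\alpha=1$ the two sides of the claimed identity both equal $\chi_m^s(0)$, so the equality is trivially true (regardless of whether $\chi_m^s$ is trivial or nontrivial, since the convention from the Notation fixes a single value of $\chi_m^s(0)$).

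The only genuinely subtle point, and the one I would be most careful about, is the $\alpha=1$ case: one must avoid invoking multiplicativity there, because $\chi_m^s(0)$ is defined by convention rather than by the homomorphism property, and then rely on the fact that both the image of $0$ on the left ($\chi_m^s(\beta\cdot 0)$) and on the right ($\chi_m^s(0)$) are evaluations of $\chi_m^s$ at the same argument $0$. Everything else is a routine book-keeping argument, and no deeper tool (Gauss sum or Jacobi sum identities) is needed for this lemma.
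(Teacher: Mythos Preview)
Your proposal is correct and follows essentially the same route as the paper: fix $\beta\in H_{q,m}$, substitute $\alpha=\beta^{-1}\gamma$, and use that $\chi_m^s(\beta)=1$ for $\beta\in H_{q,m}$ to strip off the factor of $\beta$. The paper's proof is terser and glosses over the $\alpha=1$ case that you handle explicitly, but the argument is the same.
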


\begin{proof}
\begin{align*}
\sum\limits_{\beta,\gamma\in H_{q,m}}\chi^s(\beta-\gamma)
=&\sum\limits_{\beta\in H_{q,m}}\sum\limits_{\gamma\in H_{q,m}}\chi^s(1-\beta^{-1}\gamma)\\
=&\sum\limits_{\beta\in H_{q,m}}\sum\limits_{\alpha\in H_{q,m}}\chi^s(1-\alpha)=f\sum\limits_{\alpha\in H_{q,m}}\chi^s(1-\alpha).\qedhere
\end{align*}
\end{proof}

\begin{notation}
For any multiplicative character $\chi$ of order $m$ on $\mathbb{F}_q$ and $\gamma\in\mathbb{F}_q^*$, let $A_{\chi,\gamma}=\{\alpha\in H_{q,m}\mid\chi(1-\alpha)=\chi(\gamma)\}$, $B_{q,m,\gamma}=\{(\alpha,\beta)\in H_{q,m}\times H_{q,m}\mid\alpha-\beta=\gamma\}$ and $C_{q,m,\gamma}=\{(\alpha,\beta)\in M_{q,m}\times M_{q,m}\mid\alpha-\beta=\gamma\}$.
\end{notation}

The following lemma is apparent.

\begin{lemma}\label{l4}
Let $\chi$ be a multiplicative character of order $m$ on $\mathbb{F}_q$ and $\gamma\in\mathbb{F}_q^*$. Then the following statements hold.
\begin{itemize}
\item[(a)] $(\alpha,\beta)\mapsto\alpha^{-1}\beta$ is a bijection from $B_{q,m,\gamma}$ to $A_{\chi,\gamma}$. In particular, $|A_{\chi,\gamma}|=|B_{q,m,\gamma}|$.
\item[(b)] $|C_{q,m,\gamma}|=|B_{q,m,\gamma}|+|C_{q,m,\gamma}\cap(H_{q,m}\times\{0\})|+|C_{q,m,\gamma}\cap(\{0\}\times H_{q,m})|$.
\end{itemize}
\end{lemma}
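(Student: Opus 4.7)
The lemma splits into two independent claims: part (a) asks for a bijection of sets, while part (b) is a disjoint-union count. Neither step looks deep — the author even flags the lemma as ``apparent'' — but I want to be explicit about where the defining conditions are used.

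For part (a), the plan is the usual three-step verification. First I would check that the map $\varphi \colon (\alpha,\beta) \mapsto \alpha^{-1}\beta$ really lands in $A_{\chi_m,\gamma}$: the image lies in $H_{q,m}$ since $H_{q,m}$ is a multiplicative group, and the identity $1 - \alpha^{-1}\beta = \alpha^{-1}(\alpha-\beta) = \alpha^{-1}\gamma$ combined with $\chi_m(\alpha)=1$ (because $\alpha$ is an $m$th-power and $\chi_m$ has order $m$) yields $\chi_m(1-\alpha^{-1}\beta) = \chi_m(\gamma)$. For injectivity and surjectivity simultaneously, the key observation is that if $\delta := \alpha^{-1}\beta$ is prescribed, then the pair of equations $\beta = \alpha\delta$ and $\alpha - \beta = \gamma$ forces $\alpha(1-\delta) = \gamma$; since $\gamma \neq 0$ we have $\delta \neq 1$, and so $\alpha = \gamma/(1-\delta)$ and $\beta = \gamma\delta/(1-\delta)$ are uniquely determined. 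This proves injectivity at once. For surjectivity, given any $\delta \in A_{\chi_m,\gamma}$ one defines $(\alpha,\beta)$ by the same formulas; the condition $\chi_m(1-\delta) = \chi_m(\gamma)$ built into $A_{\chi_m,\gamma}$ is exactly what forces $\chi_m(\alpha) = \chi_m(\gamma)/\chi_m(1-\delta) = 1$, hence $\alpha \in H_{q,m}$, and then $\beta = \alpha\delta \in H_{q,m}$ as well.

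For part (b), the plan is a partition argument. Write $M_{q,m} \times M_{q,m}$ as the disjoint union of the four blocks $H_{q,m}\times H_{q,m}$, $H_{q,m}\times\{0\}$, $\{0\}\times H_{q,m}$, and $\{(0,0)\}$, and intersect each with $C_{q,m,\gamma}$. The intersection with the first block is $B_{q,m,\gamma}$ by definition; the intersection with $\{(0,0)\}$ is empty because $0-0 = 0 \neq \gamma$; the remaining two intersections contribute the two other terms in the claimed identity. Summing sizes over the partition finishes part (b).

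The main (and only) obstacle I anticipate is keeping track of the technicalities around $\chi_m$: namely that $\chi_m$ vanishes on $0$ but equals $1$ on every element of $H_{q,m}$, so that the identity $\chi_m(1-\alpha^{-1}\beta) = \chi_m(\gamma)$ is both meaningful (no zero arguments, since $\alpha = \beta$ would force $\gamma = 0$) and equivalent to the membership condition defining $A_{\chi_m,\gamma}$. Once this is settled, both (a) and (b) are routine.
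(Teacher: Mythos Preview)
Your proposal is correct. The paper itself does not prove this lemma at all --- it simply declares the statement ``apparent'' and moves on --- so there is no approach to compare against; your explicit verification (well-definedness via $1-\alpha^{-1}\beta=\alpha^{-1}\gamma$ and $\chi_m|_{H_{q,m}}\equiv 1$, the inverse formula $\alpha=\gamma/(1-\delta)$ for bijectivity, and the four-block partition of $M_{q,m}\times M_{q,m}$ for part~(b)) is exactly the natural argument one would supply.
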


We express the multiplicative character sum in the next lemma in terms of $|A_{\chi,\gamma}|$.

\begin{lemma}\label{l2}
Let $\chi$ be a multiplicative character of order $m$ on $\mathbb{F}_q$ and $\gamma\in\mathbb{F}_q^*$. Then
$$
\sum\limits_{s=0}^{m-1}\chi^{-s}(\gamma)\sum\limits_{\alpha\in H_{q,m}}\chi^s(1-\alpha)=m|A_{\chi,\gamma}|+1.
$$
\end{lemma}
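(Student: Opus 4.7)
The plan is to interchange the order of summation and then apply the standard orthogonality/geometric series identity for $m$th roots of unity, with the only subtlety being the treatment of $\alpha = 1$ and the trivial character exponent $s = 0$.

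First I would rewrite the double sum as
\begin{equation*}
\sum_{\alpha \in H_{q,m}} \sum_{s=0}^{m-1} \chi_m^{-s}(\gamma)\, \chi_m^s(1-\alpha),
\end{equation*}
and split the outer sum into the term $\alpha = 1$ and the terms $\alpha \in H_{q,m}\setminus\{1\}$. For $\alpha = 1$ we have $1-\alpha = 0$, and by the convention introduced just before the lemma, $\chi_m^s(0) = 1$ if $s = 0$ and $0$ otherwise (since $\chi_m$ has order $m \geq 2$, each $\chi_m^s$ with $1 \leq s \leq m-1$ is nontrivial). Hence the $\alpha = 1$ contribution collapses to the single $s=0$ term $\chi_m^0(\gamma) = 1$.

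Next, for each $\alpha \in H_{q,m}\setminus\{1\}$, both $1-\alpha$ and $\gamma$ lie in $\mathbb{F}_q^*$, so $\chi_m(1-\alpha)/\chi_m(\gamma)$ is a well-defined $m$th root of unity, and
\begin{equation*}
\sum_{s=0}^{m-1} \chi_m^{-s}(\gamma)\, \chi_m^s(1-\alpha) = \sum_{s=0}^{m-1}\left(\frac{\chi_m(1-\alpha)}{\chi_m(\gamma)}\right)^{\!s}.
\end{equation*}
By the standard geometric series identity, this sum equals $m$ when $\chi_m(1-\alpha) = \chi_m(\gamma)$ and $0$ otherwise. Summing over $\alpha \in H_{q,m}\setminus\{1\}$ therefore gives $m$ times the number of such $\alpha$, which is exactly $|A_{\chi_m,\gamma}|$ — here I would note that $\alpha = 1$ is automatically excluded from $A_{\chi_m,\gamma}$ because $\chi_m(0) = 0 \neq \chi_m(\gamma)$ for nontrivial $\chi_m$, so no adjustment is needed. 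Adding the $1$ from the $\alpha = 1$ contribution yields $m|A_{\chi_m,\gamma}| + 1$.

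I do not expect any genuine obstacle; the only thing to watch is the asymmetry in the definition of $\chi_m^s(0)$ between $s = 0$ and $s \neq 0$, which is what produces the stray $+1$ on the right-hand side. No results earlier in the excerpt are even needed beyond the convention for $\chi_m^s(0)$.
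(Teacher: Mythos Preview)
Your proof is correct and follows essentially the same approach as the paper: interchange the order of summation, separate the $\alpha=1$ term (where the convention $\chi_m^s(0)=\delta_{s,0}$ yields the stray $+1$), and apply orthogonality of the $m$th roots of unity on the remaining terms. The paper's proof is just a slightly terser version of exactly this computation.
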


\begin{proof}
We have
\begin{align*}
&\sum\limits_{s=0}^{m-1}\chi^{-s}(\gamma)\sum\limits_{\alpha\in H_{q,m}}\chi^s(1-\alpha)\\
=&\sum\limits_{\alpha\in H_{q,m}}\sum\limits_{s=0}^{m-1}\chi^{-s}(\gamma)\chi^s(1-\alpha)\\
=&\sum\limits_{\alpha\in H_{q,m}\setminus\{1\}}\sum\limits_{s=0}^{m-1}\chi^{-s}(\gamma)\chi^s(1-\alpha)
+\sum\limits_{s=0}^{m-1}\chi^{-s}(\gamma)\chi^s(0)\\
=&\sum\limits_{\substack{\alpha\in H_{q,m}\setminus\{1\}\\\chi(1-\alpha)=\chi(\gamma)}}\sum\limits_{s=0}^{m-1}1
+\sum\limits_{\substack{\alpha\in H_{q,m}\setminus\{1\}\\\chi(1-\alpha)\neq\chi(\gamma)}}\sum\limits_{s=0}^{m-1}\left(\frac{\chi(1-\alpha)}{\chi(\gamma)}\right)^s
+\sum\limits_{s=0}^{m-1}\chi^{-s}(\gamma)\chi^s(0).
\end{align*}
For any $\alpha\in H_{q,m}\setminus\{1\}$ such that $\chi(1-\alpha)\neq\chi(\gamma)$,
$$
\sum\limits_{s=0}^{m-1}\left(\frac{\chi(1-\alpha)}{\chi(\gamma)}\right)^s
=\frac{\left(\frac{\chi(1-\alpha)}{\chi(\gamma)}\right)^m-1}{\frac{\chi(1-\alpha)}{\chi(\gamma)}-1}
=\frac{1-1}{\frac{\chi(1-\alpha)}{\chi(\gamma)}-1}=0.
$$
Hence it follows that
\begin{align*}
\sum\limits_{s=0}^{m-1}\chi^{-s}(\gamma)\sum\limits_{\alpha\in H_{q,m}}\chi^s(1-\alpha)=&\sum\limits_{\substack{\alpha\in H_{q,m}\setminus\{1\}\\\chi(1-\alpha)=\chi(\gamma)}}\sum\limits_{s=0}^{m-1}+\sum\limits_{s=0}^{m-1}\chi^{-s}(\gamma)\chi^s(0)\\
=&\sum\limits_{\substack{\alpha\in H_{q,m}\setminus\{1\}\\\chi(1-\alpha)=\chi(\gamma)}}m+\chi^0(\gamma)\chi^0(0)\\
=&\sum\limits_{\substack{\alpha\in H_{q,m}\\\chi(1-\alpha)=\chi(\gamma)}}m+1=m|A_{\chi,\gamma}|+1.\qedhere
\end{align*}
\end{proof}

\subsection{Gauss and Jacobi sums}

For a multiplicative character $\chi$ on $\mathbb{F}_q$, the \emph{Gauss sum} $G_q(\chi)$ is defined by
$$
G_q(\chi)=\sum\limits_{\alpha\in\mathbb{F}_q}\chi(\alpha)\zeta_p^{\mathrm{tr}(\alpha)},
$$
where $\mathrm{tr}$ is the trace map from $\mathbb{F}_q$ to $\mathbb{F}_p$. For multiplicative characters $\chi,\psi$ on $\mathbb{F}_q$, the \emph{Jacobi sum} $J_q(\chi,\psi)$ is defined by
$$
J_q(\chi,\psi)=\sum\limits_{\alpha\in\mathbb{F}_q}\chi(\alpha)\psi(1-\alpha).
$$
It is immediate from the definition that $J_q(\psi,\chi)=J_q(\chi,\psi)$.

We only list here some basic facts about Gauss and Jacobi sums which will be used in the sequel, and refer to~\cite[Chapters 1 and 2]{berndt1998gauss} for their proof and more properties of Gauss and Jacobi sums.

\begin{proposition}\label{p1}
Let $\chi$ be a multiplicative character of order $m$ on $\mathbb{F}_q$, and $s,t\in\mathbb{Z}$. Then the following statements hold.
\begin{itemize}
\item[(a)]
\begin{equation*}
|G_q(\chi^s)|=
\begin{cases}
\sqrt{q},\quad\text{if $s\not\equiv0\pmod{m}$},\\
0,\quad\text{if $s\equiv0\pmod{m}$}.
\end{cases}
\end{equation*}
\item[(b)] If $s\not\equiv0\pmod{m}$, then
\begin{equation*}
G_q(\chi^s)G_q(\chi^{-s})=\chi^s(-1)q.
\end{equation*}
\item[(c)] If $s\not\equiv0\pmod{m}$ or $t\not\equiv0\pmod{m}$, then
\begin{equation*}
J_q(\chi^s,\chi^t)=
\begin{cases}
G_q(\chi^s)G_q(\chi^t)/G_q(\chi^{s+t}),\quad\text{if $s+t\not\equiv0\pmod{m}$},\\
-\chi^s(-1),\quad\text{if $s+t\equiv0\pmod{m}$}.
\end{cases}
\end{equation*}
\item[(d)] If $m$ is even and $s\not\equiv0\pmod{m}$, then
\begin{equation*}
\chi^s(4)J_q(\chi^s,\chi^s)=J_q(\chi^s,\chi^{m/2}).
\end{equation*}
\end{itemize}
\end{proposition}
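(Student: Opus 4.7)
The plan is to verify the four classical identities along the lines of \cite[Chapters 1 and 2]{berndt1998gauss}, using additive-character orthogonality over $\mathbb{F}_q$ throughout and a substitution trick in (d).

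For (a), I would split on whether $\chi_m^s$ is trivial. In the trivial case the Gauss sum reduces to $\sum_{\alpha\in\mathbb{F}_q}\zeta_p^{\tr\alpha}$; since $\tr:\mathbb{F}_q\to\mathbb{F}_p$ is a surjective $\mathbb{F}_p$-linear map, each $a\in\mathbb{F}_p$ is hit $q/p$ times, so the sum equals $(q/p)\sum_{a\in\mathbb{F}_p}\zeta_p^a=0$. In the nontrivial case I would expand
$$
|G_q(\chi_m^s)|^2=\sum_{\alpha,\beta\in\mathbb{F}_q^*}\chi_m^s(\alpha)\chi_m^{-s}(\beta)\zeta_p^{\tr(\alpha-\beta)},
$$
substitute $\alpha=\beta\delta$, and collapse the inner $\beta$-sum by additive orthogonality (it equals $q-1$ at $\delta=1$ and $-1$ otherwise); finishing with $\sum_{\delta\in\mathbb{F}_q^*}\chi_m^s(\delta)=0$ yields $|G_q(\chi_m^s)|^2=q$.

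Parts (b) and (c) follow the same template. For (b) I would either deduce it from (a) via the conjugation identity $\overline{G_q(\chi_m^s)}=\chi_m^s(-1)G_q(\chi_m^{-s})$ (obtained by $\alpha\mapsto-\alpha$ in the defining sum), or compute $G_q(\chi_m^s)G_q(\chi_m^{-s})$ directly as a double sum and collapse by additive orthogonality. For (c) the standard route is to expand
$$
G_q(\chi_m^s)G_q(\chi_m^t)=\sum_{\alpha,\beta\in\mathbb{F}_q}\chi_m^s(\alpha)\chi_m^t(\beta)\zeta_p^{\tr(\alpha+\beta)},
$$
reorganise by $\gamma=\alpha+\beta$, substitute $\alpha=\gamma\alpha'$ for $\gamma\neq0$ to factor out $\chi_m^{s+t}(\gamma)\zeta_p^{\tr\gamma}$, and split off the $\gamma=0$ term. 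When $s+t\not\equiv0\pmod m$ the $\gamma=0$ contribution vanishes by the same character-sum argument as in (a), and the remainder factors as $J_q(\chi_m^s,\chi_m^t)G_q(\chi_m^{s+t})$; the case $s+t\equiv0\pmod m$ is handled by a direct evaluation of $\sum_\alpha\chi_m^s(\alpha)\chi_m^{-s}(1-\alpha)$ together with (b).

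The least routine piece is (d). Since $m$ is even, $q$ is odd (because $m\mid q-1$), so $4\in\mathbb{F}_q^*$ and $\rho:=\chi_m^{m/2}$ is the unique quadratic character. Applying the substitution $y=\alpha(1-\alpha)$ in $J_q(\chi_m^s,\chi_m^s)=\sum_\alpha\chi_m^s(\alpha(1-\alpha))$, and using that each fibre of $\alpha\mapsto\alpha(1-\alpha)$ at $y$ has size $1+\rho(1-4y)$ (with the convention $\rho(0)=0$), the Jacobi sum splits as
$$
J_q(\chi_m^s,\chi_m^s)=\sum_{y\in\mathbb{F}_q}\chi_m^s(y)+\sum_{y\in\mathbb{F}_q}\chi_m^s(y)\rho(1-4y);
$$
the first sum vanishes, and the substitution $y=z/4$ in the second produces $\chi_m^{-s}(4)J_q(\chi_m^s,\rho)$, which rearranges to the claim. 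The main point to watch throughout the proposition is consistent use of the nonstandard convention $\chi(0)=1$ for trivial $\chi$ but $\chi(0)=0$ for nontrivial $\chi$; this is delicate in (c) when isolating the $\gamma=0$ term and in (d) when $1-4y=0$, and is where I would expect the bookkeeping to be most error-prone.
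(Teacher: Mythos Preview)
Your proof sketch is correct and follows the standard arguments. Note that the paper does not actually give a proof of this proposition: it states the four identities and refers the reader to \cite[Chapters~1 and~2]{berndt1998gauss} for proofs, which are precisely the arguments you outline (additive orthogonality for (a)--(c), and the substitution $y=\alpha(1-\alpha)$ with the fibre count $1+\rho(1-4y)$ for (d)). So your approach and the paper's (cited) approach coincide.
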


The following lemma evaluates sums of Jacobi sums with one character fixed.

\begin{lemma}\label{l1}
Let $\chi$ be a multiplicative character of order $m$ on $\mathbb{F}_q$. If $s\not\equiv0\pmod{m}$, then
$$
\sum\limits_{t=1}^{m-1}J_q(\chi^s,\chi^t)=1+m\sum\limits_{\alpha\in H_{q,m}}\chi^s(1-\alpha).
$$
\end{lemma}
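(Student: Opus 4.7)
The plan is to expand the Jacobi sum definition, swap the order of summation so that the sum over $t$ can be evaluated by character orthogonality, and then reparametrize.

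First, I would write
\begin{equation*}
\sum_{t=1}^{m-1}J_q(\chi_m^s,\chi_m^t)=\sum_{\alpha\in\mathbb{F}_q}\chi_m^s(\alpha)\sum_{t=1}^{m-1}\chi_m^t(1-\alpha).
\end{equation*}
The inner sum should be rewritten as $\sum_{t=0}^{m-1}\chi_m^t(1-\alpha)-\chi_m^0(1-\alpha)$, where $\chi_m^0$ is the trivial character extended by the paper's convention so that $\chi_m^0\equiv 1$ on all of $\mathbb{F}_q$. The full sum $\sum_{t=0}^{m-1}\chi_m^t(\beta)$ evaluates by standard orthogonality of $\mathbb{F}_q^*/H_{q,m}$-characters to $m\cdot\mathbf{1}[\beta\in H_{q,m}]$ when $\beta\neq 0$, and to $1$ when $\beta=0$ (only the $t=0$ term survives).

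Substituting back, the inner $t$-sum equals $m\cdot\mathbf{1}[1-\alpha\in H_{q,m}]+\mathbf{1}[\alpha=1]-1$. Plugging this in splits the expression into three pieces:
\begin{equation*}
m\sum_{\substack{\alpha\in\mathbb{F}_q\\ 1-\alpha\in H_{q,m}}}\chi_m^s(\alpha)\ +\ \chi_m^s(1)\ -\ \sum_{\alpha\in\mathbb{F}_q}\chi_m^s(\alpha).
\end{equation*}
The middle term is just $1$. Since $s\not\equiv 0\pmod{m}$ the character $\chi_m^s$ is nontrivial, so $\chi_m^s(0)=0$ and the last sum vanishes by character orthogonality on $\mathbb{F}_q^*$.

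Finally I would reparametrize the first sum by setting $\beta=1-\alpha$, so $\alpha=1-\beta$ and $\beta$ runs over $H_{q,m}$, giving $m\sum_{\beta\in H_{q,m}}\chi_m^s(1-\beta)+1$, which is the desired identity. There is no serious obstacle here; the only point requiring care is the extension convention $\chi_m^0(0)=1$ versus $\chi_m^t(0)=0$ for $t\not\equiv 0\pmod m$, which is exactly what produces the ``$+1$'' on the right-hand side and must be tracked carefully when separating the $t=0$ term from the geometric-sum orthogonality argument.
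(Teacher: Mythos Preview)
Your proposal is correct and follows essentially the same route as the paper: expand the Jacobi sums, interchange the order of summation, evaluate the inner sum over $t$ by character orthogonality on $\mathbb{F}_q^*/H_{q,m}$, and reparametrize $\alpha\mapsto 1-\alpha$. The only cosmetic difference is that the paper keeps the sum $\sum_{t=1}^{m-1}$ and splits according to whether $1-\beta\in H_{q,m}$ or $1-\beta\in\mathbb{F}_q^*\setminus H_{q,m}$ (the case $1-\beta=0$ dropping out silently), whereas you add and subtract the $t=0$ term and use the full orthogonality relation; both bookkeeping choices produce the same ``$+1$'' and the argument is otherwise identical.
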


\begin{proof}
Note that
\begin{align*}
\sum\limits_{t=1}^{m-1}J_q(\chi^s,\chi^t)
=&\sum\limits_{t=1}^{m-1}\sum\limits_{\beta\in\mathbb{F}_q}\chi^s(\beta)\chi^t(1-\beta)\\
=&\sum\limits_{\beta\in\mathbb{F}_q}\chi^s(\beta)\sum\limits_{t=1}^{m-1}\chi^t(1-\beta)\\
=&\sum\limits_{1-\beta\in H_{q,m}}\chi^s(\beta)(m-1)
+\sum\limits_{1-\beta\in\mathbb{F}_q^*\setminus H_{q,m}}\chi^s(\beta)\sum\limits_{t=1}^{m-1}\chi^t(1-\beta)\\
=&\sum\limits_{1-\beta\in H_{q,m}}\chi^s(\beta)(m-1)
+\sum\limits_{1-\beta\in\mathbb{F}_q^*\setminus H_{q,m}}\chi^s(\beta)\sum\limits_{t=0}^{m-1}\chi^t(1-\beta)\\
&-\sum\limits_{1-\beta\in\mathbb{F}_q^*\setminus H_{q,m}}\chi^s(\beta)\chi^0(1-\beta).
\end{align*}
For any $\beta\in\mathbb{F}_q$ such that $1-\beta\in\mathbb{F}_q^*\setminus H_{q,m}$, as $\chi(1-\beta)\neq1$, we have
$$
\sum\limits_{t=0}^{m-1}\chi^t(1-\beta)=\frac{(\chi(1-\beta))^m-1}{\chi(1-\beta)-1}=\frac{1-1}{\chi(1-\beta)-1}=0.
$$
It follows that
\begin{align*}
\sum\limits_{t=1}^{m-1}J_q(\chi^s,\chi^t)=&\sum\limits_{1-\beta\in H_{q,m}}\chi^s(\beta)(m-1)
-\sum\limits_{1-\beta\in\mathbb{F}_q^*\setminus H_{q,m}}\chi^s(\beta)\chi^0(1-\beta)\\
=&(m-1)\sum\limits_{1-\beta\in H_{q,m}}\chi^s(\beta)-\sum\limits_{1-\beta\in\mathbb{F}_q^*\setminus H_{q,m}}\chi^s(\beta)\\
=&(m-1)\sum\limits_{\alpha\in H_{q,m}}\chi^s(1-\alpha)-\sum\limits_{\alpha\in\mathbb{F}_q^*\setminus H_{q,m}}\chi^s(1-\alpha)\\
=&m\sum\limits_{\alpha\in H_{q,m}}\chi^s(1-\alpha)-\sum\limits_{\alpha\in\mathbb{F}_q^*}\chi^s(1-\alpha)\\
=&m\sum\limits_{\alpha\in H_{q,m}}\chi^s(1-\alpha)-\sum\limits_{\alpha\in\mathbb{F}_q}\chi^s(1-\alpha)+\chi^s(1)\\
=&1+m\sum\limits_{\alpha\in H_{q,m}}\chi^s(1-\alpha).\qedhere
\end{align*}
\end{proof}

\subsection{Discrete Fourier transform}

For a complex-valued function $X$ on $\mathbb{Z}/r\mathbb{Z}$, the discrete Fourier transform (DFT) of $X$, denoted by $\hat{X}$, is the complex-valued function on $\mathbb{Z}/r\mathbb{Z}$ defined by
$$
\hat{X}(s)=\sum\limits_{t=0}^{r-1}\zeta_r^{-st}X(t),\quad s=0,\dots,r-1.
$$
Here are two basic formulae for DFT, the convolution formula and the inverse formula, see for example \cite[page 36]{terras1999fourier} for a proof.

\begin{proposition}\label{p3}
The following statements hold.
\begin{itemize}
\item[(a)] If $W$, $X$ and $Y$ are complex-valued functions on $\mathbb{Z}/r\mathbb{Z}$ with
$$
W(s)=\sum\limits_{t=0}^{r-1}X(t)Y(s-t),\quad s=0,\dots,r-1,
$$
then $\hat{W}(s)=\hat{X}(s)\hat{Y}(s)$ for all $s\in\mathbb{Z}/r\mathbb{Z}$.
\item[(b)] If $X$ is a complex-valued function on $\mathbb{Z}/r\mathbb{Z}$, then $X(s)=\hat{\hat{X}}(-s)/r$ for all $s\in\mathbb{Z}/r\mathbb{Z}$. In particular, DFT is an isomorphism on the complex vector space of complex-valued functions on $\mathbb{Z}/r\mathbb{Z}$.
\end{itemize}
\end{proposition}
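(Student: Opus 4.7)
The plan is to derive both parts directly from the defining sum
$\hat X(s)=\sum_{t=0}^{r-1}\zeta_r^{-st}X(t)$,
using only the orthogonality relation
$\sum_{u=0}^{r-1}\zeta_r^{uv}=r$ if $v\equiv 0\pmod r$ and $0$ otherwise,
which is a one-line geometric-series calculation and the sole nontrivial input.

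For part (a), I would expand $\hat W(s)=\sum_{u=0}^{r-1}\zeta_r^{-su}W(u)$, substitute the convolution expression for $W(u)$, and interchange the two finite sums. Performing the substitution $v=u-t$ in the inner index and using the factorization $\zeta_r^{-su}=\zeta_r^{-st}\zeta_r^{-sv}$, the inner sum becomes $\hat Y(s)$ (independent of $t$), so the outer sum collapses to $\hat X(s)\hat Y(s)$.

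For part (b), I would write $\hat{\hat X}(-s)=\sum_{u=0}^{r-1}\zeta_r^{su}\hat X(u)$, substitute the definition of $\hat X(u)$ inside, and swap the two summations. The inner sum in $u$ is precisely the orthogonality sum with $v=s-t$; it vanishes unless $t\equiv s\pmod r$, in which case it equals $r$. Hence $\hat{\hat X}(-s)=rX(s)$, which is the claimed inversion formula. The ``in particular'' assertion follows because DFT is visibly $\mathbb C$-linear in $X$, and part (b) exhibits a two-sided inverse (up to the factor $1/r$ and the sign reversal of the argument), so DFT is a bijective linear endomorphism of the complex vector space of functions on $\mathbb Z/r\mathbb Z$.

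I anticipate no real obstacle: both identities are standard textbook manipulations, and the content reduces to bookkeeping of indices together with the orthogonality of additive characters of $\mathbb Z/r\mathbb Z$. The only point requiring care is being consistent with the sign convention in the definition of $\hat X$ so that the sign reversal $-s$ in part (b) comes out correctly.
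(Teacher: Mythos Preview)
Your proposal is correct and is the standard direct computation; the paper itself does not give a proof of this proposition at all but simply refers to \cite[page 36]{terras1999fourier}. So there is nothing to compare against---your argument supplies what the paper omits.
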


We note that since complex-valued functions on $\mathbb{Z}/r\mathbb{Z}$ are determined by their values on the $r$ points $0,\dots,r-1$, Proposition \ref{p3} can be translated into the language of $r$-dimensional complex vectors. This will be more convenient to apply in cases.

\section{Necessary and sufficient conditions}\label{sec2}

\subsection{Cyclotomic difference sets}

Before we give the existence criterions for cyclotomic difference sets, recall the necessary condition (\ref{12}) for cyclotomic $(q,f,\lambda)$-difference sets.

\begin{theorem}\label{t9}
Suppose $f-1=\lambda m$ and let $\chi$ be a multiplicative character of order $m$ on $\mathbb{F}_q$. Then $H_{q,m}$ is a $(q,f,\lambda)$-difference set in $\mathbb{F}_q$ if and only if
\begin{equation}\label{26}
\sum\limits_{\alpha\in H_{q,m}}\chi^s(1-\alpha)=0,\quad s=1,\ldots,m-1.
\end{equation}
\end{theorem}

\begin{proof}
First suppose that $H_{q,m}$ is a $(q,f,\lambda)$-difference set. In view of (\ref{15}) we then have for $s=1,\ldots,m-1$ that
$$
\sum\limits_{\beta,\gamma\in H_{q,m}}\chi^s(\beta-\gamma)=\sum\limits_{\substack{\beta,\gamma\in H_{q,m}\\\beta\neq\gamma}}\chi^s(\beta-\gamma)
=\lambda\sum\limits_{\alpha\in\mathbb{F}_q^*}\chi^s(\alpha)=0.
$$
This leads to (\ref{26}) by Lemma \ref{l3}.

Next suppose that (\ref{26}) holds. Let $\gamma$ be an arbitrary element in $\mathbb{F}_q^*$. Then
$$
\sum\limits_{s=0}^{m-1}\chi^{-s}(\gamma)\sum\limits_{\alpha\in H_{q,m}}\chi^s(1-\alpha)=\sum\limits_{\alpha\in H_{q,m}}\chi^0(1-\alpha)=f.
$$
It follows that $|A_{\chi,\gamma}|=(f-1)/m=\lambda$ by Lemma \ref{l2}, and so $|B_{q,m,\gamma}|=\lambda$ according to Lemma \ref{l4}(a). By the definition of difference sets, this completes the proof.
\end{proof}

Combining Lemma \ref{l1} and Theorem \ref{t9} we obtain a necessary and sufficient condition of cyclotomic difference sets via Jacobi sums.

\begin{theorem}\label{t5}
Suppose $f-1=\lambda m$, and let $\chi$ be a multiplicative character of order $m$ on $\mathbb{F}_q$. Then $H_{q,m}$ is a $(q,f,\lambda)$-difference set in $\mathbb{F}_q$ if and only if
\begin{equation}\label{30}
\sum\limits_{t=1}^{m-1}J_q(\chi^s,\chi^t)=1,\quad s=1,\dots,m-1.
\end{equation}
\end{theorem}

In light of Proposition \ref{p1}(c), (\ref{30}) can be rewritten by Gauss sums.

\begin{theorem}\label{t2}
Suppose $f-1=\lambda m$, and let $\chi$ be a multiplicative character of order $m$ on $\mathbb{F}_q$. Then $H_{q,m}$ is a $(q,f,\lambda)$-difference set in $\mathbb{F}_q$ if and only if
\begin{equation}\label{16}
\sum\limits_{\substack{t=1\\ t\neq s}}^{m-1}\chi^t(-1)G_q(\chi^t)G_q(\chi^{s-t})=(1+\chi^s(-1))G_q(\chi^s),\quad s=1,\dots,m-1.
\end{equation}
\end{theorem}

\begin{proof}
Utilizing Proposition \ref{p1}, we reformulate (\ref{30}) to
$$
\sum\limits_{\substack{t=1\\ s+t\neq m}}^{m-1}\frac{G_q(\chi^s)G_q(\chi^t)}{G_q(\chi^{s+t})}-\chi^s(-1)=1,\quad s=1,\dots,m-1,
$$
which is equivalent to
$$
\sum\limits_{\substack{t=1\\ t\neq m-s}}^{m-1}\frac{\chi^t(-1)G_q(\chi^t)G_q(\chi^{m-s-t})}{G_q(\chi^{m-s})}=1+\chi^s(-1),\quad s=1,\dots,m-1.
$$
After multiplying both sides by $G_q(\chi^{m-s})$ and replacing $s$ by $m-s$, this turns out to be (\ref{16}). Hence the theorem follows by Theorem \ref{t5}.
\end{proof}

\subsection{Modified cyclotomic difference sets}

Parallel with cyclotomic difference sets we can establish existence criterions for modified cyclotomic difference sets. Recall the necessary condition (\ref{13}) for the modified cyclotomic $(q,f+1,\lambda)$-difference sets.

\begin{theorem}\label{t6}
Suppose $f+1=\lambda m$, and let $\chi$ be a multiplicative character of order $m$ on $\mathbb{F}_q$. Then $M_{q,m}$ is a $(q,f+1,\lambda)$-difference set in $\mathbb{F}_q$ if and only if
\begin{equation}\label{27}
\sum\limits_{\alpha\in H_{q,m}}\chi^s(1-\alpha)=-1-\chi^s(-1),\quad s=1,\ldots,m-1.
\end{equation}
\end{theorem}

\begin{proof}
First suppose that $M_{q,m}$ is a $(q,f+1,\lambda)$-difference set. Then for $s=1,\ldots,m-1$,
$$
\sum\limits_{\beta,\gamma\in M_{q,m}}\chi^s(\beta-\gamma)=\sum\limits_{\substack{\beta,\gamma\in M_{q,m}\\ \beta\neq\gamma}}\chi^s(\beta-\gamma)=\lambda\sum\limits_{\alpha\in\mathbb{F}_q^*}\chi^s(\alpha)=0.
$$
On the other hand,
\begin{align*}
\sum\limits_{\beta,\gamma\in M_{q,m}}\chi^s(\beta-\gamma)=&\sum\limits_{\beta,\gamma\in H_{q,m}}\chi^s(\beta-\gamma)+\sum\limits_{\beta\in H_{q,m}}\chi^s(\beta)+\sum\limits_{\gamma\in H_{q,m}}\chi^s(-\gamma)\\
=&\sum\limits_{\beta,\gamma\in H_{q,m}}\chi^s(\beta-\gamma)+f+f\chi^s(-1).
\end{align*}
Hence
\begin{equation*}
\sum\limits_{\beta,\gamma\in H_{q,m}}\chi^s(\beta-\gamma)=-f(1+\chi^s(-1)),\quad s=1,\ldots,m-1,
\end{equation*}
and thus we get (\ref{27}) by virtue of Lemma \ref{l3}.

Now suppose conversely that (\ref{27}) holds. Let $\gamma$ be an arbitrary element in $\mathbb{F}_q^*$. Then
$$
\sum\limits_{s=0}^{m-1}\chi^{-s}(\gamma)\sum\limits_{\alpha\in H_{q,m}}\chi^s(1-\alpha)=f-\sum\limits_{s=1}^{m-1}\chi^{-s}(\gamma)(1+\chi^s(-1)).
$$
We thereby deduce from Lemmas \ref{l4} and \ref{l2} that
\begin{equation}\label{14}
m|B_{q,m,\gamma}|+1=f-\sum\limits_{s=1}^{m-1}\chi^{-s}(\gamma)(1+\chi^s(-1))
\end{equation}
Recall that $C_{q,m,\gamma}=\{(\alpha,\beta)\in M_{q,m}\times M_{q,m}\mid\alpha-\beta=\gamma\}$. It suffices to show $|C_{q,m,\gamma}|=\lambda$ by the definition of difference set. Observe that $\chi(\gamma)\neq1$ implies $|C_{q,m,\gamma}\cap(H_{q,m}\times\{0\})|=0$ while $\chi(\gamma)\neq\chi(-1)$ implies $|C_{q,m,\gamma}\cap(\{0\}\times H_{q,m})|=0$. The cases for $\chi(\gamma)$ divide into the following four.

\underline{Case 1. $\chi(\gamma)\neq1$ or $\chi(-1)$.} In this case
$$
\sum\limits_{s=1}^{m-1}\chi^{-s}(\gamma)(1+\chi^s(-1))=-2,
$$
whence (\ref{14}) gives $|B_{q,m,\gamma}|=\lambda$. We then conclude $|C_{q,m,\gamma}|=|B_{q,m,\gamma}|=\lambda$ viewing Lemma \ref{l4}(b).

\underline{Case 2. $\chi(\gamma)=1\neq\chi(-1)$.} Then (\ref{14}) gives $m|B_{q,m,\gamma}|+1=f-(m-1-1)$, i.e., $|B_{q,m,\gamma}|=\lambda-1$. Moreover,   \begin{equation}\label{17}
C_{q,m,\gamma}\cap(H_{q,m}\times\{0\})=\{(\gamma,0)\},
\end{equation}
so $|C_{q,m,\gamma}|=|B_{q,m,\gamma}|+1=\lambda$ by Lemma \ref{l4}(b).

\underline{Case 3. $\chi(\gamma)=\chi(-1)\neq1$.} In this case, (\ref{14}) leads to $|B_{q,m,\gamma}|=\lambda-1$, and it follows by Lemma \ref{l4}(b) that $|C_{q,m,\gamma}|=|B_{q,m,\gamma}|+1=\lambda$ since
\begin{equation}\label{18}
C_{q,m,\gamma}\cap(\{0\}\times H_{q,m})=\{(0,-\gamma)\}.
\end{equation}

\underline{Case 4. $\chi(\gamma)=1=\chi(-1)$.} In this case, (\ref{14}) leads to $|B_{q,m,\gamma}|=\lambda-2$, and it follows by Lemma \ref{l4}(b) that $|C_{q,m,\gamma}|=|B_{q,m,\gamma}|+2=\lambda$ since we have both (\ref{17}) and (\ref{18}).
\end{proof}

Combination of Lemma \ref{l1} and Theorem \ref{t6} leads to a necessary and sufficient condition of modified cyclotomic difference sets via Jacobi sums.

\begin{theorem}\label{t7}
Suppose $f+1=\lambda m$, and let $\chi$ be a multiplicative character of order $m$ on $\mathbb{F}_q$. Then $M_{q,m}$ is a $(q,f+1,\lambda)$-difference set in $\mathbb{F}_q$ if and only if
\begin{equation}\label{31}
\sum\limits_{t=1}^{m-1}J_q(\chi^s,\chi^t)=1-m-m\chi^s(-1),\quad s=1,\dots,m-1.
\end{equation}
\end{theorem}

Along the same lines as Theorem \ref{t2}, we can reformulate (\ref{31}) by Gauss sums as follows.

\begin{theorem}\label{t3}
Suppose $f+1=\lambda m$, and let $\chi$ be a multiplicative character of order $m$ on $\mathbb{F}_q$. Then $M_{q,m}$ is a $(q,f+1,\lambda)$-difference set in $\mathbb{F}_q$ if and only if
\begin{equation}\label{20}
\sum\limits_{\substack{t=1\\ t\neq s}}^{m-1}\chi^t(-1)G_q(\chi^t)G_q(\chi^{s-t})=(1-m)(1+\chi^s(-1))G_q(\chi^s),\quad s=1,\dots,m-1.
\end{equation}
\end{theorem}

\section{Existence conditions via polynomial equations}\label{sec4}

\subsection{System on $g$-level}

We investigate the existence problem for cyclotomic and modified cyclotomic difference sets based on the criterions obtained in the previous section.

First we embark on the case when $m$ is odd. It is already known in this case that neither $H_{q,m}$ nor $M_{q,m}$ form a nontrivial difference set in $\mathbb{F}_q$ if $q$ is odd \cite[Chapter 1, Part 1]{storer1967cyclotomy}. However, the parity argument for cyclotomic numbers used to prove this fact does not appeal to even $q$'s. In Theorem \ref{t8} below, we deal with the case when $m$ is odd in a uniform way for both even and odd $q$'s. It turns out that the only existent one is $M_{16,3}$, which is not mentioned in the literature.

\begin{theorem}\label{t8}
Suppose that $m$ is odd. Then the following statements hold.
\begin{itemize}
\item[(a)] $H_{q,m}$ is never a nontrivial difference set in $\mathbb{F}_q$.
\item[(b)] $M_{q,m}$ is a nontrivial difference set in $\mathbb{F}_q$ if and only if $(q,m)=(16,3)$.
\end{itemize}
\end{theorem}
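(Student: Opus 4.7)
The plan is to recast the Gauss-sum criteria of Theorems \ref{t2} and \ref{t3} as DFT identities on $\mathbb{Z}/m\mathbb{Z}$ and convert them into quadratic equations for the Gauss periods
$$
\eta_k := \sum_{\alpha\in g^k H_{q,m}} \zeta_p^{\tr\alpha}\qquad (k=0,\dots,m-1),
$$
where $g$ is a primitive element of $\mathbb{F}_q$. For odd $m$ one has $\chi_m(-1)=1$ (so $-1\in H_{q,m}$, whence each $\eta_k$ is real) and the additive orthogonality $\sum_k\eta_k=-1$. Writing $y_s:=G_q(\chi_m^s)$, the identity $G_q(\chi_m^s)=\sum_k\zeta_m^{sk}\eta_k$ (valid for $s\not\equiv 0\pmod m$) translates at the DFT level to $\hat y(k)=m\eta_k+1$.

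For part (a), the odd-$m$ simplification of Theorem \ref{t2} reads $(y\ast y)(s)=2y_s$ for $s\neq 0$, while $(y\ast y)(0)=(m-1)q$ follows from Proposition \ref{p1}(b). Applying Proposition \ref{p3}(a) and matching $\widehat{y\ast y}(k)=\hat y(k)^{2}$ yields $\eta_k^{2}=\lambda(m-1)+1$ for every $k$, so $\eta_k=\epsilon_k\sqrt{\lambda(m-1)+1}$ with $\epsilon_k\in\{\pm 1\}$. Letting $a$ and $b=m-a$ count the two signs, the identity $\sum_k\eta_k=-1$ forces $(a-b)\sqrt{\lambda(m-1)+1}=-1$; since $a+b=m$ is odd, $a-b$ is a nonzero integer, which compels $\lambda(m-1)+1=1$ and hence $\lambda=0$. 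The difference set is thus trivial.

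For (b), the analogous DFT manipulation starting from Theorem \ref{t3} produces $(\eta_k+1)^{2}=\lambda(m-1)$, and the corresponding sum yields $(a-b)\sqrt{\lambda(m-1)}=m-1$ with $d:=a-b$ a positive odd integer satisfying $d^{2}\lambda=m-1$. Substituting $\lambda=(m-1)/d^{2}$ into the parameter relation (\ref{13}) gives
$$
q\;=\;\frac{(m-1)(m-d)(m+d)}{d^{2}}.
$$

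The main obstacle will be to extract $(q,m)=(16,3)$ from this single formula. I plan to observe that $\lambda=(m-1)/d^{2}$ is necessarily even (since $m-1$ is even and $d$ is odd) and that $(m-d)(m+d)=4uv$ with $u=(m-d)/2$ and $v=(m+d)/2$ coprime (as $\gcd(u,v)\mid\gcd(d,m)=1$) and of opposite parity (since $u+v=m$ is odd); hence $16\mid q$, so the prime power $q$ must be a power of $2$. Coprimality of $u,v$ then confines a power-of-$2$ factorization of $uv$ to the case $u=1$ (the alternative $v=1$ forces the trivial $m=d=1$), so $m=d+2$ and $d^{2}\mid d+1$, yielding $d=1$, $m=3$ and $q=16$. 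Sufficiency---that $M_{16,3}$ is genuinely a $(16,6,2)$-difference set---is a short direct verification.
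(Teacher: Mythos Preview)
Your argument is correct and shares the paper's core idea: apply the DFT/convolution formula to the Gauss-sum criteria of Theorems~\ref{t2} and~\ref{t3}, exploit $\chi_m(-1)=1$ for odd $m$, and conclude that the transformed values are all square roots of a single constant, so that the sign pattern is constrained by a linear relation with odd integer coefficient. Your identification of $\hat y(k)$ with $m\eta_k+1$ via Gauss periods is exactly the paper's $\hat g$ up to normalization; the resulting identities $\eta_k^2=\lambda(m-1)+1$ and $(\eta_k+1)^2=\lambda(m-1)$ match the paper's $\hat g(s)^2=g(0)^2+m-1$.

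Where you genuinely diverge is in the endgame for part~(b). The paper separates the cases $p>2$ and $p=2$: for $p>2$ it uses a parity contradiction on $m^2(m-1)/(m-1+q)$, while for $p=2$ it invokes the \emph{second} DFT constraint (equation~(\ref{28}), arising from $g(s)g(-s)=1$) to see that $q/(\lambda(m-1))$ is an algebraic integer, and from there deduces $\lambda=m-1$ and $q=(m-1)^2(m+1)$. You bypass both the case split and the algebraic-integer step: from $d^2\lambda=m-1$ and~(\ref{13}) you write $q=4\lambda uv$ with $u=(m-d)/2$, $v=(m+d)/2$ coprime of opposite parity and $\lambda$ even, so $16\mid q$ forces $q$ to be a $2$-power, after which coprimality pins down $u=1$ and $d=1$. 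This is a more elementary and unified route; the price is that you never use the information in $y_sy_{-s}=q$ for $s\neq 0$ (the source of the paper's~(\ref{28})), but as your argument shows, that extra relation is not actually needed here.
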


\begin{proof}
Suppose that $H_{q,m}$ or $M_{q,m}$ is a nontrivial difference set. As a consequence, both $m$ and $f$ are greater than $1$. Let $\chi$ be a multiplicative character of order $m$ on $\mathbb{F}_q$. Define a function $g$ on $\mathbb{Z}/m\mathbb{Z}$ by
$$
g(s)=G_q(\chi^s)/\sqrt{q},\quad s=1,\dots,m-1,
$$
and
\begin{equation*}
g(0)=
\begin{cases}
-1/\sqrt{q},\quad\text{if $H_{q,m}$ is a nontrivial difference set},\\
(m-1)/\sqrt{q},\quad\text{if $M_{q,m}$ is a nontrivial difference set}.
\end{cases}
\end{equation*}
Noticing $\chi(-1)=1$ as $m$ is odd, we have
\begin{equation}
\sum\limits_{t=0}^{m-1}g(t)g(s-t)=0,\quad s=1,\dots,m-1
\end{equation}
by Theorems \ref{t2} and \ref{t3}, and
\begin{equation}
g(s)g(-s)=1,\quad s=1,\dots,m-1
\end{equation}
by Proposition \ref{p1}(b). Define a complex-valued function $W$ on $\mathbb{Z}/m\mathbb{Z}$ by
$$
W(s)=\sum\limits_{t=0}^{m-1}g(t)g(s-t),\quad s\in\mathbb{Z}.
$$
It follows that $W(1)=\dots=W(m-1)=0$ and
$$
W(0)=g(0)^2+\sum\limits_{t=1}^{m-1}g(t)g(-t)=g(0)^2+m-1.
$$
Relying on Proposition \ref{p3} we have
$$
\hat{g}(s)^2=\hat{W}(s)=\sum\limits_{t=0}^{m-1}\zeta_m^{-st}W(t)=W(0)=g(0)^2+m-1,\quad s\in\mathbb{Z},
$$
\begin{equation}\label{24}
\sum\limits_{r=0}^{m-1}\zeta_m^{sr}\hat{g}(r)\sum\limits_{t=0}^{m-1}\zeta_m^{-st}\hat{g}(t)=\hat{\hat{g}}(-s)\hat{\hat{g}}(s)=m^2g(s)g(-s)=m^2,\quad s=1,\dots,m-1
\end{equation}
and
\begin{equation}\label{25}
\sum\limits_{s=0}^{m-1}\hat{g}(s)=\hat{\hat{g}}(0)=mg(0).
\end{equation}
Therefore, $\hat{g}(s)=\varepsilon(s)\sqrt{g(0)^2+m-1}$ with $\varepsilon(s)=\pm1$ for $s\in\mathbb{Z}$, and substituting this into (\ref{24}) and (\ref{25}) we get
\begin{equation}\label{28}
\sum\limits_{r=0}^{m-1}\zeta_m^{sr}\varepsilon(r)\sum\limits_{t=0}^{m-1}\zeta_m^{-st}\varepsilon(t)=\frac{m^2}{g(0)^2+m-1},\quad s=1,\dots,m-1
\end{equation}
and
\begin{equation}\label{29}
\sum\limits_{s=0}^{m-1}\varepsilon(s)=\frac{mg(0)}{\sqrt{g(0)^2+m-1}}.
\end{equation}
Note that $\sum_{s=0}^{m-1}\varepsilon(s)$ is an odd integer as $m$ is odd. We proceed according to the three cases below.

\underline{Case 1. $H_{q,m}$ is a nontrivial difference set.} We deduce from (\ref{29}) that
$$
\left(\frac{mg(0)}{\sqrt{g(0)^2+m-1}}\right)^2\geqslant1,
$$
i.e., $g(0)^2\geqslant1/(m+1)$. This yields $q\leqslant m+1$, which violates the condition $f>1$.

\underline{Case 2. $p>2$ and $M_{q,m}$ is a nontrivial difference set.} In this case,
$$
\frac{m^2(m-1)}{m-1+q}=\left(\frac{mg(0)}{\sqrt{g(0)^2+m-1}}\right)^2
$$
is an odd integer by (\ref{29}). However, this is a contradiction as $m-1$ is even and $m-1+q$ is odd.

\underline{Case 3. $p=2$ and $M_{q,m}$ is a nontrivial difference set.} Suppose $M_{q,m}$ is a $(q,f+1,\lambda)$-difference set. Then $q=mf+1=\lambda m^2-m+1$ by (\ref{13}). In view of (\ref{28}),
$$
\frac{q}{\lambda(m-1)}=\frac{m^2}{g(0)^2+m-1}
$$
is an algebraic integer, and thus an integer as it is rational. On the other hand, $(m-1)/\lambda=m^2(m-1)/(m-1+q)$ is an odd integer as shown in the previous case. We thereby conclude that $\lambda=m-1$ is a power of $2$, whence $q=\lambda m^2-m+1=(m-1)^2(m+1)$. This implies that $m+1$ is also a power of $2$, so we have $m=3$. Thus $\lambda=2$ and $q=16$.

Conversely, consider $\mathbb{F}_{16}$ as the splitting field of $x^4+x+1$ over $\mathbb{F}_2$. Let $\omega$ be a root of $x^4+x+1=0$ in $\mathbb{F}_{16}$ and $\chi$ be a multiplicative character on $\mathbb{F}_{16}$ such that $\chi(\omega)=\zeta_3$. It is easy to check that $1-\omega^3=\omega^{14}$, $1-\omega^6=\omega^{13}$, $1-\omega^9=\omega^7$ and $1-\omega^{12}=\omega^{11}$. Now for $s=1,2$
$$
\sum\limits_{\alpha\in H_{16,3}}\chi^s(1-\alpha)=\sum\limits_{t=0}^4\chi^s(1-\omega^{3t})=\zeta_3^{2s}+\zeta_3^s+\zeta_3^s+\zeta_3^{2s}=-2.
$$
Thus $M_{16,3}$ is a $(16,6,2)$-difference set by Theorem \ref{t6}.
\end{proof}

During the proof of Theorem \ref{t8}, it is the relations of Gauss sums, with no need to evaluate them, from (\ref{16}) and (\ref{20}) as well as Proposition \ref{p1} that rule out the possibility of cyclotomic and modified cyclotomic difference sets. This suggests us to approach the case when $m$ is even in the same vein, namely studying the relations that Gauss sums necessarily satisfy.

\begin{theorem}\label{t1}
Suppose $m$ is even. If $H_{q,m}$ or $M_{q,m}$ is a difference set in $\mathbb{F}_q$, then the system of equations
\begin{equation}\label{22}
\begin{cases}
\sum\limits_{t=0}^{2s}(-1)^tg_tg_{2s-t}+\sum\limits_{t=2s+1}^{m-1}(-1)^tg_tg_{m+2s-t}=0,\quad s=1,\dots,\frac{m}{2}-1,\\
g_sg_{m-s}=(-1)^s,\quad s=1,\dots,\frac{m}{2},\\
h^sg_sg_{\frac{m}{2}+s}=g_{2s}g_{\frac{m}{2}},\quad s=1,\dots,\frac{m}{2}-1,\\
h^\frac{m}{2}=1
\end{cases}
\end{equation}
in the unknowns $g_0,g_1,\dots,g_{m-1},h$ has a solution in $\mathbb{R}\times(\mathbb{S}^1)^m$ with $g_0=-1/\sqrt{q}$ or $(m-1)/\sqrt{q}$ respectively.
\end{theorem}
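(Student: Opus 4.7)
The plan is to follow the template of Theorem~\ref{t8}: normalize $g_s := G_q(\chi_m^s)/\sqrt{q}$ for $s=1,\dots,m-1$, choose $g_0$ according to the difference-set type, introduce the auxiliary variable $h := \chi_m(4)$, and then derive each block of (\ref{22}) from one of the identities in Section~\ref{sec3}.

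As a preliminary observation, whichever of $H_{q,m}$ or $M_{q,m}$ is assumed to form a difference set, the necessary condition (\ref{12}) or (\ref{13}) combined with $m$ being even forces $f$ to be odd. Hence $q = mf+1$ is odd, the characteristic $p$ is odd (so $4 \neq 0$ in $\mathbb{F}_q$ and $h \in \mathbb{S}^1$ is well-defined), and $-1$ is not an $m$-th power in $\mathbb{F}_q^*$, so $\chi_m(-1) = -1$ and $\chi_m^s(-1) = (-1)^s$ for every $s$. Proposition~\ref{p1}(a) gives $g_s \in \mathbb{S}^1$ for $1 \le s \le m-1$, and I set $g_0 := -1/\sqrt{q}$ in the $H_{q,m}$ case and $g_0 := (m-1)/\sqrt{q}$ in the $M_{q,m}$ case.

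The three easy blocks of (\ref{22}) are direct translations. The second block $g_s g_{m-s} = (-1)^s$ is Proposition~\ref{p1}(b) divided by $q$, using $\chi_m^{-s} = \chi_m^{m-s}$ and $\chi_m^s(-1) = (-1)^s$. The third block comes from Proposition~\ref{p1}(d) applied for each $s \in \{1,\dots,m/2-1\}$, after rewriting both Jacobi sums through Proposition~\ref{p1}(c) (the non-degeneracy conditions $2s, s+m/2 \not\equiv 0 \pmod m$ hold in this range) and dividing by $q$. And $h^{m/2} = \chi_m(4)^{m/2} = \chi_m(2)^m = 1$ because $\chi_m$ has order $m$.

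The first block is the main step. Apply Theorem~\ref{t2} (respectively Theorem~\ref{t3}) with $s$ replaced by $2k$ for $k = 1,\dots,m/2-1$; the factor $1 + \chi_m^{2k}(-1) = 2$ puts the right-hand side in the form $c \cdot G_q(\chi_m^{2k})$ with $c = 2$ or $c = 2(1-m)$. Dividing by $q$ converts the $G_q$-identity into a constraint on the partial cyclic convolution $\sum_{t=1,\,t\neq 2k}^{m-1} (-1)^t g_t g_{(2k-t) \bmod m}$. Completing this sum to $\sum_{t=0}^{m-1}$ adds precisely $2g_0 g_{2k}$ (from the terms $t=0$ and $t=2k$), and the prescribed value of $g_0$ is exactly the one that makes the completed cyclic sum vanish; splitting at $t = 2s$ then reproduces the first equation of (\ref{22}). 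The only real obstacle is this index-bookkeeping, together with the sanity check that the odd-$s$ cases of (\ref{16}) and (\ref{20}) are automatic (the completed cyclic sum is antisymmetric under $t \mapsto 2s - t$ when $s$ is odd, hence vanishes), which explains why (\ref{22}) records only the even-$s$ instances.
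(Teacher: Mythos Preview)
Your proposal is correct and follows essentially the same route as the paper: define $g_s=G_q(\chi_m^s)/\sqrt{q}$, choose $g_0$ according to the case, set $h=\chi_m(4)$, deduce $\chi_m(-1)=-1$ from the parity of $f$, and read off each block of (\ref{22}) from Proposition~\ref{p1} and Theorems~\ref{t2}/\ref{t3}. The only difference is cosmetic: the paper asserts the vanishing of the full cyclic sum $\sum_{t=0}^{m-1}(-1)^tg_tg_{s-t}$ for all $s$ and then restricts to even $s$, whereas you spell out the completion step (adding the $t=0$ and $t=2k$ terms) and explain why the odd-$s$ instances are redundant. One small slip in that parenthetical remark: the antisymmetry is under $t\mapsto s-t$, not $t\mapsto 2s-t$; this does not affect the proof since the odd-$s$ check is not needed for the statement.
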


\begin{proof}
Let $\chi$ be a multiplicative character of order $m$ on $\mathbb{F}_q$,
$$
g_s=G_q(\chi^s)/\sqrt{q},\quad s=1,\dots,m-1,
$$
\begin{equation*}
g_0=
\begin{cases}
-1/\sqrt{q},\quad\text{if $H_{q,m}$ is a difference set},\\
(m-1)/\sqrt{q},\quad\text{if $M_{q,m}$ is a difference set},
\end{cases}
\end{equation*}
and $h=\chi(4)$. Since $m$ is even, we derive from (\ref{12}) and (\ref{13}) that $f$ is odd, and thus $\chi(-1)=-1$. It follows from Theorems \ref{t2} and \ref{t3} that
$$
\sum\limits_{t=0}^s(-1)^tg_tg_{s-t}+\sum\limits_{t=s+1}^{m-1}(-1)^tg_tg_{m+s-t}=0,\quad s=1,\dots,m-1.
$$
In particular, taking even $s$ gives the first line of (\ref{22}). By Proposition \ref{p1} we have $|g_1|=\dots=|g_{m-1}|=1$,
$$
g_sg_{m-s}=(-1)^s,\quad s=1,\dots,m-1
$$
and
\begin{equation}\label{19}
h^s\frac{g_sg_s}{g_{2s}}=\frac{g_sg_{\frac{m}{2}}}{g_{\frac{m}{2}+s}},\quad s=1,\dots,\frac{m}{2}-1.
\end{equation}
Hence the second line of (\ref{22}) holds, and (\ref{19}) implies the third line of (\ref{22}). Finally, $h=\chi^2(2)$ satisfies $h^{m/2}=1$ and $|h|=1$. This completes the proof.
\end{proof}

For an even $m$, we call (\ref{22}) the \emph{system of order $m$ on $g$-level}. Note that if we count the subscript of $g$ modulo $m$ in the system of order $m$ on $g$-level, then the first line of (\ref{22}) can be written more concisely as
$$
\sum\limits_{t=0}^{m-1}(-1)^tg_tg_{2s-t}=0,\quad s=1,\dots,\frac{m}{2}-1.
$$

\begin{notation}
Denote the affine variety consisting of solutions $(g_0,g_1,\dots,g_{m-1},h)\in\mathbb{C}^{m+1}$ to the system of order $m$ on $g$-level by $L_m$.
\end{notation}

Here are some observations on $L_m$.

\begin{proposition}\label{p4}
Suppose that $m$ is even.
\begin{itemize}
\item[(a)] If $(g_0,g_1,\dots,g_{m-1},h)\in L_m$, then $(-g_0,-g_1,\dots,-g_{m-1},h)\in L_m$.
\item[(b)] If $(g_0,g_1,\dots,g_{m-1},h)\in L_m$, then for any integer $r$,
$$
(g_0,\zeta_m^rg_1,\dots,\zeta_m^{(m-1)r}g_{m-1},h)\in L_m.
$$
\item[(c)] If $(g_0,g_1,\dots,g_{m-1},h)\in L_m$, then for any integer $r$ which is coprime to $m$, $(g_0,g_r,\dots,g_{(m-1)r},h)$ with subscripts modulo $m$ lies in $L_m$.
\item[(d)] There exists $(g_0,g_1,\dots,g_{m-1},h)\in L_m$ such that $g_0=m/2-1$; if $m+1$ is a prime power then there exists $(g_0,g_1,\dots,g_{m-1},h)\in L_m$ such that $g_0^2=1/(m+1)$. In particular, $L_m$ is nonempty.
\end{itemize}
\end{proposition}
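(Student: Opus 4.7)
My plan is to handle parts (a), (b), (c) as verifications of symmetries of the system (\ref{22}), and part (d) by invoking Theorem \ref{t1} for the second assertion and giving an explicit construction for the first.

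Part (a) follows from the observation that the first three lines of (\ref{22}) are homogeneous of degree two in $g_0,\ldots,g_{m-1}$ and the fourth line involves only $h$, so the map $g_s\mapsto-g_s$ preserves every equation. For (b), the substitution $g_s\mapsto\zeta_m^{rs}g_s$ introduces matching phase factors on each equation: the first line picks up the overall factor $\zeta_m^{2rs}$, the second line picks up $\zeta_m^{rm}=1$, and both sides of the third line pick up $\zeta_m^{r(m/2+2s)}$; the fourth line is untouched.

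Part (c), the substitution $g_s\mapsto g_{rs}$ with subscripts modulo $m$, is more delicate and is where I expect the main technical obstacle. The key point is that $m$ even together with $\gcd(r,m)=1$ forces both $r$ and $r^{-1}\bmod m$ to be odd. Reindexing the first line by $u=rt\bmod m$ then yields $(-1)^{r^{-1}u}=(-1)^u$, so the substituted equation reads $\sum_u(-1)^ug_ug_{2rs-u}=0$, which matches the original equation at the unique $s'\in\{1,\ldots,m/2-1\}$ with $2s'\equiv 2rs\pmod m$ (such $s'$ exists because $\gcd(r,m/2)=1$). Line two is preserved because $(-1)^{rs}=(-1)^s$. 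For line three, the identity $r(m/2)\equiv m/2\pmod m$ (using $r$ odd) simplifies the substituted equation to $h^sg_{rs}g_{m/2+rs}=g_{2rs}g_{m/2}$; this is directly the original equation when $rs\bmod m\in\{1,\ldots,m/2-1\}$, and when $rs\bmod m\in\{m/2+1,\ldots,m-1\}$ it follows from the original equation at $s'=(rs-m/2)\bmod m$ combined with $h^{m/2}=1$.

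For (d), the second assertion follows immediately from Theorem \ref{t1} applied to the trivial $(m+1,1,0)$-difference set $H_{m+1,m}=\{1\}$ in $\mathbb{F}_{m+1}$, yielding a point $(g_0,\ldots,g_{m-1},h)\in L_m$ with $g_0=-1/\sqrt{m+1}$ and hence $g_0^2=1/(m+1)$. For the first assertion I will exhibit an explicit point: set $g_0=m/2-1$, $g_s=i$ for odd $s$, $g_s=-1$ for even $s\in\{2,\ldots,m-2\}$, and $h=(-1)^{m/2+1}$. Lines two and four reduce to routine parity checks, and line three is verified by splitting on the parities of $s$ and $m/2$ to confirm the chosen value of $h$. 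The only substantive verification is line one: for each $s\in\{1,\ldots,m/2-1\}$ the two terms with $t\in\{0,2s\}$ contribute $2(1-m/2)$ in total (via $g_0g_{2s}=-(m/2-1)$), while each of the remaining $m-2$ terms contributes $+1$, since $(-1)^tg_tg_{2s-t}=+1$ in both parity subcases; the total therefore vanishes.
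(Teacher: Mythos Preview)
Your treatments of parts (a), (b) and both assertions of (d) are correct. The paper declares (a)--(c) ``straightforward'' and only writes out (d); your explicit point for the first assertion of (d) is in fact simpler than the paper's, which uses $g_s=(-1)^{(s-1)(s-2)/2}\zeta_{m/2}^s$ when $4\mid m$ and a more intricate formula when $m\equiv 2\pmod 4$. Your uniform choice $g_s=i$ for odd $s$, $g_s=-1$ for nonzero even $s$, with $h=(-1)^{m/2+1}$, avoids that case split and checks out cleanly.

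There is, however, a genuine gap in your verification of line three in part (c). You correctly reduce the substituted equation to $h^{s}g_{rs}g_{m/2+rs}=g_{2rs}g_{m/2}$ and then assert this ``is directly the original equation when $rs\bmod m\in\{1,\dots,m/2-1\}$.'' But the original third-line equation at index $s'=rs\bmod m$ reads $h^{s'}g_{s'}g_{m/2+s'}=g_{2s'}g_{m/2}$, so its left side carries $h^{rs}$, not $h^{s}$. These coincide only when $h^{r-1}=1$, which is not implied by $h^{m/2}=1$ together with $\gcd(r,m)=1$. Concretely, take $m=6$ and $r=5$: from the original equation at $s=2$ one has $g_5g_2=h^{-2}g_4g_3$, hence $h\,\tilde g_1\tilde g_4=h g_5g_2=h^{-1}g_4g_3$, which equals $\tilde g_2\tilde g_3=g_4g_3$ only if $h=1$; yet $L_6$ contains points with $h=\zeta_3$ (these correspond to $\hat L_{6,1}$ in Table~\ref{tab1}). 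So the statement of (c) as printed cannot hold with $h$ unchanged. What the substitution actually produces is $(g_0,g_r,\dots,g_{(m-1)r},h^{r})\in L_m$, consistent with Proposition~\ref{p5}(c) on the $(\hat g,\theta)$-level via Theorem~\ref{t4}; your argument becomes correct once $h$ is replaced by $h^{r}$ in the conclusion.
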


\begin{proof}
Parts (a)--(c) are straightforward. We only need to prove (d). If $m\equiv0\pmod{4}$, then take $g_0=m/2-1$,
$$
g_s=(-1)^\frac{(s-1)(s-2)}{2}\zeta_\frac{m}{2}^s,\quad s=1,\dots,m-1
$$
and $h=-1$. If $m\equiv2\pmod{4}$, then take $g_0=(-1)^{(m+6)(m-6)/32}(m/2-1)$,
$$
g_s=(-1)^\frac{(4s+m+2)(4s+m-2)}{32}\zeta_{2m}^s,\quad s=1,\dots,m-1
$$
and $h=1$. One verifies directly that $(g_0,g_1,\dots,g_{m-1},h)$ is a solution of (\ref{22}) with $g_0=\pm(m/2-1)$. Thus by part (a) we conclude that (\ref{22}) has a solution with $g_0=m/2-1$. Now suppose that $m+1$ is a prime power. Then $\{1\}$ is an $m$th-cyclotomic $(m+1,1,0)$-difference set in $\mathbb{F}_{m+1}$. By Theorem \ref{t1}, (\ref{22}) has a solution $(g_0,g_1,\dots,g_{m-1},h)$ with $g_0^2=1/(m+1)$.
\end{proof}

\begin{theorem}\label{t10}
Suppose that $m$ is even.
\begin{itemize}
\item[(a)] If each $(g_0,g_1,\dots,g_{m-1},h)\in L_m\cap(\mathbb{R}^*\times(\mathbb{S}^1)^m)$ satisfies $g_0^2\geqslant1/(m+1)$, then $H_{q,m}$ is not a nontrivial difference set in $\mathbb{F}_q$.
\item[(b)] If each $(g_0,g_1,\dots,g_{m-1},h)\in L_m\cap(\mathbb{R}^*\times(\mathbb{S}^1)^m)$ satisfies either $g_0^2\geqslant1$ or $g_0^2=1/(m+1)$, then neither $H_{q,m}$ nor $M_{q,m}$ is a nontrivial difference set in $\mathbb{F}_q$.
\end{itemize}
\end{theorem}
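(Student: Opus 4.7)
The plan is to apply Theorem \ref{t1} contrapositively. Suppose for contradiction that a nontrivial cyclotomic (resp.\ modified cyclotomic) difference set exists; then Theorem \ref{t1} produces a specific point of $L_m\cap(\mathbb{R}^*\times(\mathbb{S}^1)^m)$ whose $g_0$-coordinate is forced to equal $-1/\sqrt{q}$ (resp.\ $(m-1)/\sqrt{q}$). It will suffice to verify that the explicit value $g_0^2=1/q$ (resp.\ $(m-1)^2/q$) fails the hypothesis of the corresponding part. Membership of the tuple in $\mathbb{R}^*\times(\mathbb{S}^1)^m$ is automatic, since Proposition \ref{p1}(a) gives $|g_s|=|G_q(\chi_m^s)/\sqrt{q}|=1$ for $1\leqslant s\leqslant m-1$, while $h=\chi_m(4)$ is a root of unity and the prescribed $g_0$ is a nonzero real.

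For part (a), nontriviality of $H_{q,m}$ as a $(q,f,\lambda)$-difference set forces $\lambda\geqslant1$ (otherwise $f=1$ by (\ref{12}), so $|H_{q,m}|=1$ and the set is trivial). Hence $f\geqslant m+1$ and $q=mf+1\geqslant m^2+m+1$, so the solution from Theorem \ref{t1} satisfies
\[
g_0^2=\frac{1}{q}\leqslant\frac{1}{m^2+m+1}<\frac{1}{m+1},
\]
contradicting the hypothesis.

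For part (b), the $H_{q,m}$ case is covered verbatim by the computation in (a), since $1/q$ is neither $\geqslant1$ nor equal to $1/(m+1)$ when $q\geqslant m^2+m+1$. In the $M_{q,m}$ case the solution from Theorem \ref{t1} has $g_0^2=(m-1)^2/q$, so by hypothesis either $g_0^2\geqslant1$ or $g_0^2=1/(m+1)$. To eliminate the first, I will invoke nontriviality of $M_{q,m}$ as a $(q,f+1,\lambda)$-difference set of order $(m-1)\lambda$: for $m\geqslant4$, nontriviality is implied by $\lambda\geqslant1$ alone, which together with (\ref{13}) gives $q\geqslant m^2-m+1$; for $m=2$, nontriviality forces $\lambda\geqslant2$ and hence $q\geqslant7$; in both cases the resulting bound strictly exceeds $(m-1)^2$, so $g_0^2<1$. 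To eliminate the second, the equation $q=(m-1)^2(m+1)$ would make $q$ a prime power, while $m-1$ and $m+1$ are coprime odd integers (as $m$ is even), which forces $m-1=1$, i.e.\ $m=2$ and $q=3$; however $M_{3,2}=\{0,1\}$ is a $(3,2,1)$-difference set of order $1$, hence trivial.

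The only genuinely nontrivial step is the coprimality argument excluding $q=(m-1)^2(m+1)$, and this is elementary, so no real obstacle is anticipated.
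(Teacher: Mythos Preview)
Your proof is correct and follows essentially the same approach as the paper's: both argue by contradiction from Theorem~\ref{t1}, bounding $g_0^2$ via the arithmetic constraints (\ref{12}) and (\ref{13}) to violate the hypothesis. The only cosmetic difference is in ruling out $q=(m-1)^2(m+1)$: the paper observes that $m-1$ and $m+1$ would both be powers of $p$, forcing $p=2$ and hence $m\mid q-1$ odd, while you use the coprimality of the odd integers $m\pm1$ directly to force $m-1=1$; both arguments are equally short and valid.
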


\begin{proof}
First we prove part (a). Suppose that $H_{q,m}$ is a nontrivial difference set in $\mathbb{F}_q$. Then as Theorem \ref{t1} asserts, (\ref{22}) has a solution $(g_0,g_1,\dots,g_{m-1},h)\in\mathbb{R}\times(\mathbb{S}^1)^m$ such that $g_0=-1/\sqrt{q}$. However, $g_0^2\geqslant1/(m+1)$, whence $q=m+1$. It follows that $f=1$ and thus $H_{q,m}$ is a trivial difference set. This contradiction shows that (a) is true.

Now we turn to the proof for part (b). Under the assumption in (b), since we can deduce that $g_0^2\geqslant1/(m+1)$, $H_{q,m}$ is not a nontrivial difference set in $\mathbb{F}_q$. Suppose that $M_{q,m}$ is a nontrivial $(q,f+1,\lambda)$-difference set in $\mathbb{F}_q$. By Theorem \ref{t1}, (\ref{22}) has a solution $(g_0,g_1,\dots,g_{m-1},h)\in\mathbb{R}\times(\mathbb{S}^1)^m$ such that $g_0=(m-1)/\sqrt{q}$. Viewing (\ref{13}), we then have
$$
g_0^2=\frac{(m-1)^2}{q}=\frac{(m-1)^2}{mf+1}=\frac{(m-1)^2}{m(\lambda m-1)+1}=\frac{(m-1)^2}{\lambda m^2-m+1}<\frac{(m-1)^2}{m^2-2m+1}=1.
$$
Hence the assumption in (b) forces $g_0^2=1/(m+1)$, i.e., $q=(m-1)^2(m+1)$. This implies that $m-1$ and $m+1$ are both powers of $p$. If $m>2$, then $p=2$ since $2=(m+1)-(m-1)$ is divisible by $p$, but this results in a contradiction that $m$ is odd as $m$ divides $q-1$. When $m=2$, however, we get $q=3$ and $|M_{q,m}|=2$, contrary to the assumption that $M_{q,m}$ is a nontrivial difference set. Thus (b) holds.
\end{proof}

\subsection{System on $(\hat{g},\theta)$-level}

Assume that $m$ is even for the rest of the section. In this subsection, we apply DFT to the system on $g$-level, which will lead to equivalent systems. Given an integer $\theta$, we introduce the system of equations
\begin{equation}\label{21}
\begin{cases}
m^2\hat{g}_s\hat{g}_{\frac{m}{2}+s}=\left(\sum\limits_{t=0}^{m-1}\hat{g}_t\right)^2+m^2(m-1),\quad s=0,\dots,\frac{m}{2}-1,\\
m\sum\limits_{t=0}^{m-1}\hat{g}_t\hat{g}_{s+t}=\left(\sum\limits_{t=0}^{m-1}\hat{g}_t\right)^2-m^2,\quad s=0,\dots,\frac{m}{2}-1,\\
\sum\limits_{t=0}^{m-1}(-1)^t\hat{g}_t\hat{g}_{2s-2\theta-t}=\left(\hat{g}_s+\hat{g}_{\frac{m}{2}+s}\right)\sum\limits_{t=0}^{m-1}(-1)^t\hat{g}_t,\quad s=0,\dots,\frac{m}{2}-1,
\end{cases}
\end{equation}
in the unknowns $\hat{g}_0,\hat{g}_1,\dots,\hat{g}_{m-1}$, where subscripts of $\hat{g}$'s are counted modulo $m$, and call it the \emph{system of order $m$ on $(\hat{g},\theta)$-level}.

\begin{notation}
For any $\theta\in\mathbb{Z}$, denote the affine variety consisting of solutions $(\hat{g}_0,\hat{g}_1,\dots,\hat{g}_{m-1})\in\mathbb{C}^m$ to the system of order $m$ on $(\hat{g},\theta)$-level by $\hat{L}_{m,\theta}$.
\end{notation}

The following proposition holds readily.

\begin{proposition}\label{p5}
Let $\theta$ and $\theta'$ be integers.
\begin{itemize}
\item[(a)] If $\theta'\equiv\theta\pmod{m/2}$, then $\hat{L}_{m,\theta'}=\hat{L}_{m,\theta}$.
\item[(b)] If $(\hat{g}_0,\hat{g}_1,\dots,\hat{g}_{m-1})\in\hat{L}_{m,\theta}$, then $(-\hat{g}_0,-\hat{g}_1,\dots,-\hat{g}_{m-1})\in\hat{L}_{m,\theta}$.
\item[(c)] If $(\hat{g}_0,\hat{g}_1,\dots,\hat{g}_{m-1})\in\hat{L}_{m,\theta}$ and $r$ is an integer which is coprime to $m$, then $(\hat{g}_0,\hat{g}_r,\dots,\hat{g}_{(m-1)r})$ with subscripts modulo $m$ lies in $\hat{L}_{m,r\theta}$.
\end{itemize}
\end{proposition}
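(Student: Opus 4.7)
The plan is to verify each part of the proposition by direct substitution into the system (\ref{21}) defining $\hat{L}_{m,\theta}$, exploiting a few elementary observations about how $\theta$ and the indices enter. Part~(a) is immediate: $\theta$ appears only in the third equation, and only through the subscript $2s-2\theta-t$, which is read modulo $m$. If $\theta'\equiv\theta\pmod{m/2}$ then $2\theta'\equiv 2\theta\pmod m$, so the subscript is unchanged and the two systems coincide literally; hence $\hat{L}_{m,\theta'}=\hat{L}_{m,\theta}$.

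Part~(b) is similarly immediate: each expression in (\ref{21}) is homogeneous of degree two in the $\hat{g}_t$'s — the left-hand sides are sums of products $\hat{g}_a\hat{g}_b$, and the right-hand sides are either a square $\bigl(\sum_t\hat{g}_t\bigr)^2$ plus a constant or a product of one $\hat{g}$ with a sum of $\hat{g}$'s — so replacing every $\hat{g}_t$ by $-\hat{g}_t$ leaves each equation invariant.

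For part~(c), set $\hat{g}'_s:=\hat{g}_{rs\bmod m}$. Because $m$ is even and $\gcd(r,m)=1$, the integer $r$ must be odd, which yields two pivotal facts: $rm/2\equiv m/2\pmod m$, and for every integer $t$ the parity of $rt\bmod m$ coincides with that of $t$ (since $m$ is even). Consequently $\sum_t\hat{g}'_t=\sum_u\hat{g}_u$, $\sum_t(-1)^t\hat{g}'_t=\sum_u(-1)^u\hat{g}_u$, and $\hat{g}'_{m/2+s}=\hat{g}_{m/2+rs}$. The substitution $u=rt$ then rewrites the first two equations of (\ref{21}) for $\hat{g}'$ at index $s$ as the corresponding equations for $\hat{g}$ at index $rs\bmod m$; these hold since the first equation is invariant under $s\mapsto s+m/2$ and the second under $s\mapsto -s$, together with the fact that $rs\not\equiv m/2\pmod m$ whenever $s\in\{0,\dots,m/2-1\}$. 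For the third equation at $(s,\theta')$, the same substitution turns the left-hand side into $\sum_u(-1)^u\hat{g}_u\hat{g}_{2rs-2r\theta'-u}$ and the right-hand side into $(\hat{g}_{rs}+\hat{g}_{m/2+rs})\sum_u(-1)^u\hat{g}_u$; matching this against the original third equation at index $rs\bmod m$ and parameter $\theta$ identifies the correct value of $\theta'$ modulo $m/2$, after which part~(a) absorbs any remaining representative choice.

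The main obstacle — if it deserves the name — is pure bookkeeping: keeping track of when indices should be reduced modulo $m$ versus modulo $m/2$, and invoking the reflection and $m/2$-periodicity symmetries of the first two equations to cover the indices $rs\bmod m$ that may land outside $\{0,\dots,m/2-1\}$. No substantive new ideas are needed beyond the parity and multiplication facts noted above.
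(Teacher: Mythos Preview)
Your treatment of parts (a) and (b) is correct and matches the paper's intent (the paper itself offers no argument beyond ``holds readily'').

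For part~(c), however, there is a genuine gap at exactly the point you wave your hands. Carrying out the matching you describe: the left-hand side of the third equation for $\hat g'$ at parameter $\theta'$ is $\sum_u(-1)^u\hat g_u\hat g_{2rs-2r\theta'-u}$, and the original third equation at index $rs$ reads $\sum_u(-1)^u\hat g_u\hat g_{2rs-2\theta-u}=(\hat g_{rs}+\hat g_{m/2+rs})\sum_u(-1)^u\hat g_u$. For the two left-hand sides to agree one needs $2r\theta'\equiv 2\theta\pmod m$, i.e.\ $\theta'\equiv r^{-1}\theta\pmod{m/2}$, \emph{not} $\theta'\equiv r\theta$. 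Your phrase ``identifies the correct value of $\theta'$'' hides this, and part~(a) cannot repair it: $r^{-1}\theta$ and $r\theta$ are in general distinct modulo $m/2$ (take $m=10$, $r=3$, $\theta=1$). One can also see the same phenomenon on the $g$-level: if $g'_s=g_{rs}$ then the third line of~(\ref{22}) forces $h'=h^r$, which under the bijection of Theorem~\ref{t4} again sends $\hat L_{m,\theta}$ to $\hat L_{m,r^{-1}\theta}$ (after relabelling $r\leftrightarrow r^{-1}$).

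So your computation actually proves that $(\hat g_0,\hat g_r,\dots,\hat g_{(m-1)r})\in\hat L_{m,r^{-1}\theta}$. This appears to be a slip in the stated exponent in the paper; it does not affect the subsequent use of the proposition (in combination with Lemma~\ref{l5} one simply replaces $r$ by its inverse, which is again coprime to $m$), but you should flag the discrepancy rather than paper over it.
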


\begin{theorem}\label{t4}
The map given by
\begin{equation}\label{23}
g_s=\frac{1}{m}\sum\limits_{t=0}^{m-1}\zeta_m^{st}\hat{g}_t,\quad s=0,1,\dots,m-1
\end{equation}
and $h=\zeta_{m/2}^\theta$ is a bijection from $\bigcup_{\theta=0}^{m/2-1}\hat{L}_{m,\theta}$ to $L_m$.
\end{theorem}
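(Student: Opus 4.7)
The plan is to show that the systems (\ref{22}) and (\ref{21}) correspond to each other under (\ref{23}) together with $h = \zeta_{m/2}^\theta$, and to deduce the bijection from this equivalence. By Proposition \ref{p3}(b), (\ref{23}) is the inverse DFT, a linear isomorphism of $\mathbb{C}^m$; and $\theta \mapsto \zeta_{m/2}^\theta$ is a bijection from $\{0, 1, \ldots, m/2 - 1\}$ onto the set of solutions of the fourth line of (\ref{22}). So the map is bijective at the level of underlying Cartesian products, and the theorem will follow once we verify that the first three lines of (\ref{22}) translate into the three lines of (\ref{21}). This equivalence also forces the $\hat{L}_{m,\theta}$, $\theta = 0, \ldots, m/2 - 1$, to be pairwise disjoint, so the union in the statement is effectively disjoint. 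Throughout, the relation $\sum_t \hat{g}_t = m g_0$ (obtained from (\ref{23}) at $s = 0$) will be used repeatedly.

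For lines 1 and 2 of (\ref{22}), I would introduce $R(s) = g_s g_{-s}$ and $\tilde{g}(t) = (-1)^t g_t$. Then line 2 reads $R(s) = (-1)^s$ for $s \neq 0$, and line 1 reads $(\tilde{g} * g)(2s) = 0$ for $s = 1, \ldots, m/2 - 1$. Since $\zeta_m^{m/2} = -1$, we have $\hat{\tilde{g}}_s = \hat{g}_{s+m/2}$, so by Proposition \ref{p3}(a), the DFT of $\tilde{g}*g$ is $s \mapsto \hat{g}_s \hat{g}_{s+m/2}$. Also, $(\tilde{g}*g)(t)$ vanishes automatically for odd $t$ via the change of variable $r \mapsto t - r$. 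Combined with line 1 of (\ref{22}), $\tilde{g}*g$ is supported at $0$ with $(\tilde{g}*g)(0) = g_0^2 + m - 1$ (evaluating using line 2 of (\ref{22})), so its DFT is the constant $g_0^2 + m - 1$, which rearranges to line 1 of (\ref{21}). A parallel computation, using $\hat{\hat{g}}_s = m g_{-s}$ from Proposition \ref{p3}(b), shows that the DFT of $s \mapsto \sum_t \hat{g}_t \hat{g}_{s+t}$ equals $m^2 R(s)$. Line 2 of (\ref{21}) pins down this function for $s = 0, \ldots, m/2 - 1$ and, by evenness in $s$, for all $s \neq m/2$; line 1 of (\ref{21}) supplies the remaining value at $s = m/2$ by summing $\hat{g}_t \hat{g}_{t+m/2}$ over $t$. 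Inverse DFT then recovers $R$ and gives line 2 of (\ref{22}). Reversing the arguments yields the converse.

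For line 3, set $P(r) = g_r g_{r+m/2}$; since $P(r + m/2) = P(r)$, its DFT on $\mathbb{Z}/m\mathbb{Z}$ is supported on even indices, with $\hat{P}(2u) = 2 F(u)$, where $F(u) = \sum_{r=0}^{m/2-1} \zeta_{m/2}^{-ur} P(r)$ is the length-$(m/2)$ DFT of $P$. Similarly, $\hat{g}_s + \hat{g}_{s+m/2} = 2 E(s)$ with $E(s) = \sum_{u=0}^{m/2-1} g_{2u} \zeta_{m/2}^{-us}$. A direct DFT computation gives $\sum_t (-1)^t \hat{g}_t \hat{g}_{s-t} = m \hat{P}(s)$ (the nominal factor $(-1)^s$ being harmless, since $\hat{P}$ vanishes at odd $s$) and $\sum_t (-1)^t \hat{g}_t = m g_{m/2}$. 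Substituting these, line 3 of (\ref{21}) at index $s$ becomes $F(s - \theta) = g_{m/2} E(s)$ for $s = 0, \ldots, m/2 - 1$. Rewriting the LHS as the length-$(m/2)$ DFT of $r \mapsto h^r P(r)$ (with $h = \zeta_{m/2}^\theta$) and invoking injectivity of the length-$(m/2)$ DFT shows this is equivalent to $h^r g_r g_{r+m/2} = g_{m/2} g_{2r}$ for $r = 0, \ldots, m/2 - 1$, the $r = 0$ case being trivial and the rest being line 3 of (\ref{22}). The main obstacle throughout is the careful bookkeeping of indices and signs, particularly exploiting the parity decomposition induced by the even factor $m$ to halve frequency domains.
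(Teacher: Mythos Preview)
Your proof is correct and follows essentially the same DFT/convolution approach as the paper: both exploit Proposition~\ref{p3} to translate the polynomial relations in $(g_0,\dots,g_{m-1})$ into the corresponding relations in $(\hat g_0,\dots,\hat g_{m-1})$, with the pairing $h=\zeta_{m/2}^\theta$ handling the last coordinate. Your organization is somewhat more structural---packaging the computations via the auxiliary functions $R$, $\tilde g$, $P$, $E$, $F$ and treating both directions symmetrically---whereas the paper carries out one direction through the single function $W(s)=\sum_t(-1)^tg(t)g(s-t)$ and verifies the converse by expanding each line of~(\ref{22}) directly; you are also more explicit than the paper about why the sets $\hat L_{m,\theta}$ are pairwise disjoint, which is needed for the map on the union to be well defined.
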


\begin{proof}
First of all, we note that (\ref{23}) gives a one to one correspondence between the points $(g_0,g_1,\dots,g_{m-1})$ and $(\hat{g}_0,\hat{g}_1,\dots,\hat{g}_{m-1})$ in $\mathbb{C}^m$.

Suppose $(g_0,g_1,\dots,g_{m-1},h)\in L_m$. Then $h=\zeta_{m/2}^\theta$ for some $\theta\in\{0,1,\dots,m/2-1\}$ since $h^{m/2}=1$, and (\ref{23}) determines a point $(\hat{g}_0,\hat{g}_1,\dots,\hat{g}_{m-1})\in\mathbb{C}^m$. Define a function $g$ on $\mathbb{Z}/m\mathbb{Z}$ by letting $g(s)=g_t$ whenever $s\equiv t\pmod{m}$, $t=0,\dots,m-1$. According to Proposition \ref{p3}(b) and (\ref{23}) we get that $\hat{g}(s)=\hat{g}_t$ whenever $s\equiv t\pmod{m}$, $t=0,\dots,m-1$. For each $s\in\mathbb{Z}$, let
$$
W(s)=\sum\limits_{t=0}^{m-1}(-1)^tg(t)g(s-t).
$$
One readily sees that $W$ is a function on $\mathbb{Z}/m\mathbb{Z}$, and $W(s)=0$ when $s$ is odd. It then follows by (\ref{22}) that $W(1)=\dots=W(m-1)=0$ and
$$
W(0)=g(0)^2+\sum\limits_{t=1}^{m-1}(-1)^tg(t)g(-t)=g(0)^2+m-1.
$$
In light of Proposition \ref{p3} we have
$$
\hat{g}(s)\hat{g}\left(\frac{m}{2}+s\right)=\hat{W}(s)=\sum\limits_{t=0}^{m-1}\zeta_m^{-st}W(t)=W(0)=g(0)^2+m-1,\quad s\in\mathbb{Z}
$$
and
$$
g(0)=\frac{1}{m}\hat{\hat{g}}(0)=\frac{1}{m}\sum\limits_{t=0}^{m-1}\hat{g}(t),
$$
whence
$$
m^2\hat{g}(s)\hat{g}\left(\frac{m}{2}+s\right)=\left(\sum\limits_{t=0}^{m-1}\hat{g}(t)\right)^2+m^2(m-1),\quad s\in\mathbb{Z}
$$
Moreover, direct calculation leads to
$$
m\sum\limits_{t=0}^{m-1}\hat{g}(t)\hat{g}(s+t)=m^2g(0)^2-m^2=\left(\sum\limits_{t=0}^{m-1}\hat{g}(t)\right)^2-m^2,\quad s\not\equiv\frac{m}{2}\pmod{m}
$$
and
$$
\sum\limits_{t=0}^{m-1}(-1)^t\hat{g}(t)\hat{g}(2s-2\theta-t)=\left(\hat{g}(s)+\hat{g}\left(\frac{m}{2}+s\right)\right)\sum\limits_{t=0}^{m-1}(-1)^t\hat{g}(t),\quad s\in\mathbb{Z}.
$$
Hence $(\hat{g}_0,\hat{g}_1,\dots,\hat{g}_{m-1})=(\hat{g}(0),\hat{g}(1),\dots,\hat{g}(m-1))$ satisfies (\ref{21}), i.e.,
$$
(\hat{g}_0,\hat{g}_1,\dots,\hat{g}_{m-1})\in\hat{L}_{m,\theta}.
$$

Conversely, suppose $(\hat{g}_0,\hat{g}_1,\dots,\hat{g}_{m-1})\in\hat{L}_{m,\theta}$ for some $\theta\in\{0,1,\dots,m/2-1\}$ and $(g_0,g_1,\dots,g_{m-1},h)\in\mathbb{C}^{m+1}$ is given by (\ref{23}) and $h=\zeta_{m/2}^\theta$. We aim to show that $(g_0,g_1,\dots,g_{m-1},h)\in L_m$. The equation $h^{m/2}=1$ is clearly satisfied. Since $\sum_{t=0}^{m-1}\hat{g}_t=mg_0$, one derives from (\ref{21}) that
\begin{equation*}
\begin{cases}
\hat{g}_s\hat{g}_{\frac{m}{2}+s}=g_0^2+m-1,\quad s\in\mathbb{Z},\\
\sum\limits_{t=0}^{m-1}\hat{g}_t\hat{g}_{s+t}=mg_0^2-m,\quad s\not\equiv\frac{m}{2}\pmod{m},\\
\sum\limits_{t=0}^{m-1}(-1)^t\hat{g}_t\hat{g}_{2s-2\theta-t}=\left(\hat{g}_s+\hat{g}_{\frac{m}{2}+s}\right)\sum\limits_{t=0}^{m-1}(-1)^t\hat{g}_t,\quad s\in\mathbb{Z},
\end{cases}
\end{equation*}
where subscripts of $\hat{g}$ are counted modulo $m$. If we also count the subscripts of $g$ modulo $m$, then for $s=1,\dots,m/2-1$
\begin{align*}
\sum\limits_{t=0}^{m-1}(-1)^tg_tg_{2s-t}
=&\frac{1}{m^2}\sum\limits_{t=0}^{m-1}(-1)^t\sum\limits_{r=0}^{m-1}\zeta_m^{tr}\hat{g}_r\sum\limits_{r=0}^{m-1}\zeta_m^{(2s-t)j}\hat{g}_j\\
=&\frac{1}{m^2}\sum\limits_{r=0}^{m-1}\sum\limits_{j=0}^{m-1}\zeta_m^{2sj}\hat{g}_r\hat{g}_j\sum\limits_{t=0}^{m-1}\zeta_m^{(m/2+r-j)t}\\
=&\frac{1}{m}\sum\limits_{r=0}^{m-1}\zeta_m^{2s(m/2+r)}\hat{g}_r\hat{g}_{\frac{m}{2}+r}\\
=&\frac{g_0^2+m-1}{m}\sum\limits_{r=0}^{m-1}\zeta_m^{2sr}\\
=&0.
\end{align*}
Thus the first line of (\ref{22}) holds. For $s=1,\dots,m/2$
\begin{align*}
g_sg_{m-s}=&\frac{1}{m^2}\sum\limits_{r=0}^{m-1}\zeta_m^{sr}\hat{g}_r\sum\limits_{t=0}^{m-1}\zeta_m^{(m-s)t}\hat{g}_t\\
=&\frac{1}{m^2}\sum\limits_{r=0}^{m-1}\sum\limits_{t=0}^{m-1}\zeta_m^{s(r-t)}\hat{g}_r\hat{g}_t\\
=&\frac{1}{m^2}\sum\limits_{j=0}^{m-1}\zeta_m^{sj}\sum\limits_{t=0}^{m-1}\hat{g}_{j+t}\hat{g}_t\\
=&\frac{mg_0^2-m}{m^2}\sum\limits_{\substack{j=0\\ j\neq m/2}}^{m-1}\zeta_m^{sj}+\frac{(-1)^s}{m^2}\sum\limits_{t=0}^{m-1}\hat{g}_t\hat{g}_{\frac{m}{2}+t}\\
=&-\frac{mg_0^2-m}{m^2}(-1)^s+\frac{g_0^2+m-1}{m}(-1)^s\\
=&(-1)^s,
\end{align*}
which proves the second line of (\ref{22}). Similarly, one can show that
$$
h^sg_sg_{m+s}=\frac{1}{m^2}\sum\limits_{r=0}^{m/2-1}\zeta_m^{2sr}\left(\hat{g}_r+\hat{g}_{\frac{m}{2}+r}\right)\sum\limits_{t=0}^{m-1}(-1)^t\hat{g}_t
=g_{2s}g_\frac{m}{2}
$$
for $s=1,\dots,m/2-1$, and so the third line of (\ref{22}) is satisfied. This completes our proof.
\end{proof}

Theorem \ref{t4} roughly says that the system of order $m$ on $g$-level decomposes into $m/2$ systems on $(\hat{g},\theta)$-level of the same order. This helps to reduce computation in the next section when $m$ gets large. In fact, we avoid the high degree equation $h^{m/2}=1$ in the the system of order $m$ on $g$-level and instead solve $\tau(m/2)$ systems of order $m$ on $(\hat{g},\theta)$-level, where $\tau$ is the number-of-divisors function. To illustrate it in more detail, we need the following number-theoretic result.

\begin{lemma}\label{l5}
Let $\theta$ be an integer. Then there exists a prime number $r>m$ such that $r\theta\equiv\gcd(\theta,m/2)\pmod{m/2}$.
\end{lemma}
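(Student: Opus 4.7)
The plan is to reduce the congruence to a coprime-modulus setting and then use the Chinese Remainder Theorem to adjust the solution modulo the remaining prime divisors of $m$.

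First I would set $d=\gcd(\theta,m/2)$ and write $\theta=d\theta'$ and $m/2=dn$, so that $\gcd(\theta',n)=1$. A quick computation shows that the congruence $r\theta\equiv d\pmod{m/2}$ is equivalent, after cancelling the common factor $d$, to $r\theta'\equiv1\pmod{n}$. Since $\gcd(\theta',n)=1$, this congruence has a solution $r_0\in\mathbb{Z}$, and every such solution satisfies $\gcd(r_0,n)=1$.

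Next I would upgrade $r_0$ to an integer coprime to all of $m$. Let $P$ denote the product of the distinct prime divisors of $m$ that do not divide $n$. Then $\gcd(n,P)=1$, so by the Chinese Remainder Theorem there exists $r\in\mathbb{Z}$ satisfying simultaneously
\begin{equation*}
r\equiv r_0\pmod{n}\quad\text{and}\quad r\equiv1\pmod{P}.
\end{equation*}
The first congruence preserves the relation $r\theta\equiv d\pmod{m/2}$. For coprimality, I would check each prime $p\mid m$ case by case: if $p\mid n$ then $r\equiv r_0\pmod{p}$ and $p\nmid r_0$ because $\gcd(r_0,n)=1$; if $p\nmid n$ then $p\mid P$, so $r\equiv1\pmod{p}$. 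Either way $p\nmid r$, hence $\gcd(r,m)=1$, and the lemma follows.

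The only place there is any subtlety is the separation of prime divisors of $m$ into those dividing $n$ and those coprime to $n$; the former are automatically handled by $\gcd(r_0,n)=1$, and the latter are handled by the auxiliary congruence modulo $P$. I do not anticipate a genuine obstacle in the argument.
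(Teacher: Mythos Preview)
Your argument is correct. The reduction from $r\theta\equiv d\pmod{m/2}$ to $r\theta'\equiv1\pmod{n}$ is clean, and the CRT step that forces $r\equiv1$ modulo the primes of $m$ outside $n$ does exactly what is needed; the prime-by-prime check for $\gcd(r,m)=1$ is complete.

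For comparison, the paper's proof takes a shorter, more ad hoc route that exploits the standing hypothesis that $m$ is even. It applies B\'ezout directly to $\theta$ and $m/2$ to get $s$ with $s\theta\equiv d\pmod{m/2}$ and $\gcd(s,m/2)=1$, and then observes that the only prime of $m$ possibly missing from $m/2$ is $2$; so it suffices to make $s$ odd, which is achieved by replacing $s$ with $s+m/2$ when $s$ is even (this forces $m/2$ odd, hence $s+m/2$ odd). Your CRT argument is a bit heavier but more robust: it does not rely on $m$ being even and would prove the natural generalization ``for any modulus $M$ and divisor $N\mid M$, there is $r$ coprime to $M$ with $r\theta\equiv\gcd(\theta,N)\pmod{N}$'' without change. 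The paper's parity trick, by contrast, is tailored to the case $M=2N$.
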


\begin{proof}
Let $\ell=m/2$, $d=\gcd(\theta,\ell)$, $\theta_0=\theta/d$ and $\ell_0=\ell/d$. Then $\gcd(\theta_0,\ell_0)=1$, and so there exists an integer $s$ such that $s\theta_0\equiv1\pmod{\ell_0}$. As $\gcd(s,\ell_0)=1$, by Dirichlet's theorem on arithmetic progressions, there are infinitely many prime numbers congruent to $s$ modulo $\ell_0$. In particular, there is a prime number $r>m$ such that $r\equiv s\pmod{\ell_0}$. It follows that $r\theta_0\equiv1\pmod{\ell_0}$. Consequently, $r\theta_0d\equiv d\pmod{\ell_0d}$, which turns out to be $r\theta\equiv d\pmod{\ell}$, as desired.
\end{proof}

Combining Lemma \ref{l5} with parts (a) and (c) of Proposition \ref{p5} we see that, among the systems of order $m$ on $(\hat{g},\theta)$-level for all integers $\theta$, it suffices to solve for $\theta$ being the divisors of $m/2$. By computation in this $\tau(m/2)$ systems on $(\hat{g},\theta)$-level, we get all the information for the system of order $m$ on $g$-level via Theorem \ref{t4}. This is carried out in the next section up to $m=22$.

\section{Computation results and conjectures}

\subsection{Results for $m\leqslant22$}

One of the main tools for solving systems of polynomial equations is the \emph{Gr\"{o}bner basis} computation. Generally speaking, a Gr\"{o}bner basis is a particular kind of generating set of an ideal in a polynomial ring. (The reader is referred to \cite{cox2007ideals} for the explicit definition and an introduction to this topic.) Once a Gr\"{o}bner basis is computed, it is easy to know many important properties of the ideal and the associated algebraic variety, such as the dimension \cite[\S3 Chapter 9]{cox2007ideals}.

Here we apply a state-of-the-art algorithm of Faug\`{e}re \cite{faugere1999new} called \emph{F4} to compute Gr\"{o}bner bases of the systems order $m$ on $(\hat{g},\theta)$-level with $m=6$, $10$, $12$, $14$, $16$, $18$, $20$ and $22$ performed in \textsc{Maple 14}. For each $m$, we only compute for $\theta$ being a divisor of $m/2$ (when $\theta=m/2$ it is equivalent to put $\theta=0$). It turns out that in these computed cases, $\hat{L}_{m,\theta}$ is a finite set, and thus each $(\hat{g}_0,\hat{g}_1,\dots,\hat{g}_{m-1})$ satisfies
$$
F_{m,\theta}\left(\frac{1}{m}\sum\limits_{t=0}^{m-1}\hat{g}_t\right)=0
$$
for some univariate polynomial\setcounter{footnote}{0}\footnote{One can find them by the command \textbf{Groebner[UnivariatePolynomial]} in\textsc{ Maple 14}.} $F_{m,\theta}(x)$ listed in Tables \ref{tab1}--\ref{tab8} ($F_{m,\theta}(x)=1$ indicates that $\hat{L}_{m,\theta}$ is empty). Now for $m=6$, $10$, $12$, $14$, $16$, $18$, $20$ and $22$, Theorem \ref{t4} in conjunction with Lemma \ref{l5} and parts (a) and (c) of Proposition \ref{p5} implies that each $(g_0,g_1,\dots,g_{m-1},h)$ satisfies $F_m(g_0)=0$, where
$$
F_m(x)=\prod\limits_{\theta\mid n}F_{m,\theta}(x).
$$
We list $F_m(x)$ for these values of $m$ in Table \ref{tab9}\setcounter{footnote}{1}\footnote{To find $F_m(x)$, one may also compute directly in the system of order $m$ on $g$-level. However, the author's PC failed to compute a Gr\"{o}bner basis for the system on $g$-level when $m=22$ as it is too memory-consuming.}.

\vfill
\eject

\begin{table}[htbp]\label{tab1}
\caption{$F_{6,\theta}(x)$}\label{tab1}
\centering
\begin{tabular}{|c|c|c|}
\hline
$\theta$ & $0$ & $1$\\
\hline
$F_{6,\theta}(x)$ & $(x-2)(x+2)$ & $7x^2-1$\\
\hline
\end{tabular}
~\\
~\\
~\\
~\\
\end{table}

\begin{table}[htbp]
\caption{$F_{10,\theta}(x)$}\label{tab2}
\centering
\begin{tabular}{|c|c|c|}
\hline
$\theta$ & $0$ & $1$\\
\hline
$F_{10,\theta}(x)$ & $x(x-4)(x+4)$ & $11x^2-1$\\
\hline
\end{tabular}
~\\
~\\
~\\
~\\
\end{table}

\begin{table}[htbp]
\caption{$F_{12,\theta}(x)$}\label{tab3}
\centering
\begin{tabular}{|c|c|c|c|c|}
\hline
$\theta$ & $0$ & $1$ & $2$ & $3$\\
\hline
$F_{12,\theta}(x)$ & $1$ & $13x^2-1$ & $1$ & $(x-3)(x+3)(x-5)(x+5)(5x-7)(5x+7)$\\
\hline
\end{tabular}
~\\
~\\
~\\
~\\
\end{table}

\begin{table}[htbp]
\caption{$F_{14,\theta}(x)$}\label{tab4}
\centering
\begin{tabular}{|c|c|c|}
\hline
$\theta$ & $0$ & $1$\\
\hline
$F_{14,\theta}(x)$ & $(x-6)(x+6)(4x^2+3)$ & $1$\\
\hline
\end{tabular}
~\\
~\\
~\\
~\\
\end{table}

\begin{table}[htbp]
\caption{$F_{16,\theta}(x)$}\label{tab5}
\centering
\begin{tabular}{|c|c|c|c|c|}
\hline
$\theta$ & $0$ & $1$ & $2$ & $4$\\
\hline
$F_{16,\theta}(x)$ & $(7x-17)(7x+17)$ & $1$ & $17x^2-1$ & $(x-7)(x+7)$\\
\hline
\end{tabular}
~\\
~\\
~\\
~\\
\end{table}

\begin{table}[htbp]
\caption{$F_{18,\theta}(x)$}\label{tab6}
\centering
\begin{tabular}{|c|c|c|c|}
\hline
$\theta$ & $0$ & $1$ & $3$\\
\hline
$F_{18,\theta}(x)$ & $x(x-8)(x+8)$ & $19x^2-1$ & $1$\\
\hline
\end{tabular}
~\\
~\\
~\\
~\\
\end{table}

\begin{table}[htbp]
\caption{$F_{20,\theta}(x)$}\label{tab7}
\centering
\begin{tabular}{|c|c|c|c|c|}
\hline
$\theta$ & $0$ & $1$ & $2$ & $5$\\
\hline
$F_{20,\theta}(x)$ & $1$ & $1$ & $1$ & $(x-7)(x+7)(x-9)(x+9)(9x-31)(9x+31)(13x-67)(13x+67)$\\
\hline
\end{tabular}
~\\
~\\
~\\
~\\
\end{table}

\begin{table}[htbp]
\caption{$F_{22,\theta}(x)$}\label{tab8}
\centering
\begin{tabular}{|c|c|c|}
\hline
$\theta$ & $0$ & $1$\\
\hline
$F_{22,\theta}(x)$ & $(x-10)(x+10)(4x^4-60x^2+243)$ & $23x^2-1$\\
\hline
\end{tabular}
~\\
~\\
~\\
~\\
\end{table}

\begin{table}[htbp]
\caption{$F_m(x)$}\label{tab9}
\centering
\begin{tabular}{|c|c|}
\hline
$m$ & $F_m(x)$\\
\hline
$6$ & $(x-2)(x+2)(7x^2-1)$\\
\hline
$10$ & $x(x-4)(x+4)(11x^2-1)$\\
\hline
$12$ & $(x-3)(x+3)(x-5)(x+5)(5x-7)(5x+7)(13x^2-1)$\\
\hline
$14$ & $(x-6)(x+6)(4x^2+3)$\\
\hline
$16$ & $(x-7)(x+7)(7x-17)(7x+17)(17x^2-1)$\\
\hline
$18$ & $x(x-8)(x+8)(19x^2-1)$\\
\hline
$20$ & $(x-7)(x+7)(x-9)(x+9)(9x-31)(9x+31)(13x-67)(13x+67)$\\
\hline
$22$ & $(x-10)(x+10)(4x^4-60x^2+243)(23x^2-1)$\\
\hline
\end{tabular}
\end{table}

\vfill
\eject

Due to the these computation results we have the following theorem.

\begin{theorem}\label{t11}
If $m\leqslant22$ is an even integer other than $2$, $4$ or $8$, then neither $H_{q,m}$ nor $M_{q,m}$ forms a nontrivial difference set in $\mathbb{F}_q$.
\end{theorem}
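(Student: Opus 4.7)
The plan is to combine the Gr\"{o}bner basis computations summarized in Tables \ref{tab1}--\ref{tab9} with Theorem \ref{t10}(b); once those tables are in hand, the theorem reduces to a routine root inspection. Suppose $H_{q,m}$ or $M_{q,m}$ is a nontrivial difference set for some even $m\in\{6,10,12,14,16,18,20,22\}$. By Theorem \ref{t1}, the system (\ref{22}) has a solution $(g_0,\ldots,g_{m-1},h)\in\mathbb{R}\times(\mathbb{S}^1)^m$ with $g_0=-1/\sqrt{q}$ or $g_0=(m-1)/\sqrt{q}$; in particular $g_0\in\mathbb{R}^*$. Theorem \ref{t4} then produces $\theta\in\{0,\ldots,m/2-1\}$ and a point in $\hat{L}_{m,\theta}$ whose coordinates average to $g_0$. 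Lemma \ref{l5} combined with Proposition \ref{p5}(a)(c) lets me replace $\theta$ by a divisor of $m/2$ without changing $g_0$, so $g_0$ is a root of $F_{m,\theta}(x)$ for some $\theta\mid m/2$, i.e.\ $F_m(g_0)=0$.

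It therefore suffices to check, for each of the listed values of $m$, that every real nonzero root of $F_m(x)$ satisfies $g_0^2\geqslant 1$ or $g_0^2=1/(m+1)$; Theorem \ref{t10}(b) will then yield the conclusion. Inspection of Table \ref{tab9} shows that the real roots split into two families: rational roots whose squares are integers or rationals at least $1$ (for instance $\pm2,\pm3,\pm4,\pm5,\pm7/5,\pm6,\pm7,\pm17/7,\pm8,\pm9,\pm31/9,\pm67/13,\pm10$), and the irrational roots $\pm 1/\sqrt{m+1}$ coming from the factor $(m+1)x^2-1$ whenever it appears. Non-real roots (such as those of $4x^2+3$ in $F_{14},F_{16}$ or $4x^4-60x^2+243$ in $F_{22}$) and the spurious root $g_0=0$ appearing in $F_{10}$ and $F_{18}$ are irrelevant because the hypothesis of Theorem \ref{t10}(b) only concerns $(\mathbb{R}^*\times(\mathbb{S}^1)^m)$. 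For example, with $m=16$ the real nonzero roots have squares $49$, $289/49$, and $1/17$, all either $\geqslant 1$ or equal to $1/(m+1)$; the other $m$ are analogous. The excluded values $m=2,4,8$ are omitted precisely because genuine nontrivial examples (Paley, Chowla, Lehmer) exist there.

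The mathematical content of the argument, once the tables are trusted, is thus a short case check. The real obstacle is computational: producing Tables \ref{tab1}--\ref{tab8} requires a Gr\"{o}bner basis of each $\hat{L}_{m,\theta}$ and a certificate that it is zero-dimensional, so that a univariate polynomial annihilating $\frac{1}{m}\sum_t\hat{g}_t$ can be extracted. The decomposition in Theorem \ref{t4} is what keeps this feasible: rather than tackling the single system on $g$-level, which was too memory-intensive at $m=22$, the algorithm (Faug\`{e}re's F4 in \textsc{Maple}) is invoked on the $\tau(m/2)$ smaller $(\hat{g},\theta)$-systems and the resulting factors are multiplied together to form $F_m(x)$.
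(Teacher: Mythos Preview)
Your proposal is correct and follows essentially the same approach as the paper: verify that every real nonzero root of $F_m(x)$ from Table~\ref{tab9} satisfies $g_0^2\geqslant 1$ or $g_0^2=1/(m+1)$, then invoke Theorem~\ref{t10}(b). Your write-up is more explicit than the paper's (spelling out the passage through Theorem~\ref{t4}, Lemma~\ref{l5}, and Proposition~\ref{p5} that the paper places in the surrounding discussion, and listing the individual roots), but the logic is identical.
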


\begin{proof}
Let $(g_0,g_1,\dots,g_{m-1},h)$ be any point in $L_m\cap(\mathbb{R}^*\times\mathbb{C}^m)$. Then $F_m(g_0)=0$ with $F_m(x)$ lying in Table \ref{tab9}. Note that $4x^4-60x^2+243=0$ has no solution in $\mathbb{R}$. We infer from Table \ref{tab9} that either $g_0^2\geqslant1$ or $g_0^2=1/(m+1)$. Accordingly, neither $H_{q,m}$ nor $M_{q,m}$ forms a nontrivial difference set in $\mathbb{F}_q$ by Theorem \ref{t10}(b).
\end{proof}

\subsection{Conjectural classification}\label{sec1}

Let us summarize what has been known so far about the existence of nontrivial $m$th-cyclotomic and modified cyclotomic difference sets in $\mathbb{F}_q$. First, the case when $m$ is odd is dealt with in Theorem \ref{t8}: only $M_{16,3}$ arises as a difference set. Second, for even values of $m$ up to $8$, all the nontrivial $m$th-cyclotomic and modified cyclotomic difference sets in $\mathbb{F}_q$ have been determined due to Paley \cite{paley1933orthogonal}, Hall \cite{hall1965characters} and Storer \cite{storer1967cyclotomy}: they are the quadratic case with $q\equiv3\pmod{4}$ and quartic and octic case with $q=p$ as in (\ref{32})--(\ref{35}). For even values of $m$ from $10$ to $22$, $H_{q,m}$ and $M_{q,m}$ do not form nontrivial difference sets any more as shown in Theorem \ref{t11}. Now we pose a conjectural classification.

\begin{conjecture}\label{conj1}
$H_{q,m}$ is a nontrivial difference set in $\mathbb{F}_q$ if and only if one of the following appears:
\begin{itemize}
\item[(a)] $m=2$ and $q\equiv3\pmod{4}$;
\item[(b)] $m=4$ and $q=p=1+4t^2$ for some odd integer $t$;
\item[(c)] $m=8$ and $q=p=1+8u^2=9+64v^2$ for some odd integers $u$ and $v$.
\end{itemize}
$M_{q,m}$ is a nontrivial difference set in $\mathbb{F}_q$ if and only if one of the following appears:
\begin{itemize}
\item[(a')] $m=2$ and $q\equiv3\pmod{4}$;
\item[(b')] $m=3$ and $q=16$;
\item[(c')] $m=4$ and $q=p=9+4t^2$ for some odd integer $t$;
\item[(d')] $m=8$ and $q=p=49+8u^2=441+64v^2$ for some odd integer $u$ and even integer $v$.
\end{itemize}
\end{conjecture}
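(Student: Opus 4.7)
The strategy splits along the "if" and "only if" directions. The "if" direction requires only that each listed configuration actually produces a difference set: Paley's construction handles the quadratic cases (a) and (a'); Chowla and Lehmer verified the quartic cases (b) and (c'); Lehmer's octic analysis together with the Pell-equation parity constraints cited in the introduction verifies (c) and (d'); and Theorem~\ref{t8}(b) verifies (b'). No new work is needed here.

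For the "only if" direction, observe first that when $m$ is even the condition $m\mid q-1$ forces $q$ to be odd, so every even-$m$ case concerns odd $q$ only. The proof then proceeds in four regimes. When $m$ is odd, Theorem~\ref{t8} already gives the full classification, including even $q$. When $m$ is even with $2\leq m\leq 8$, the classical theorems of Lehmer, Hall, and Storer cover all odd $q$. When $m$ is even with $10\leq m\leq 22$, Theorem~\ref{t11} applies. The outstanding and genuinely new content is every even $m\geq 24$. My plan is to extend the $(\hat g,\theta)$-level machinery of Section~\ref{sec4} uniformly in $m$: by Theorem~\ref{t10}(b) it suffices to show that every $(g_0,g_1,\dots,g_{m-1},h)\in L_m\cap(\mathbb{R}^*\times(\mathbb{S}^1)^m)$ satisfies $g_0^2\geq 1$ or $g_0^2=1/(m+1)$, and by Theorem~\ref{t4} together with Lemma~\ref{l5} and Proposition~\ref{p5} this reduces to the same statement for $\hat L_{m,\theta}$ with $\theta$ ranging over divisors of $m/2$. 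The "trivial" branches exhibited in Proposition~\ref{p4}(d) already realise precisely the values $g_0^2=(m/2-1)^2\geq 1$ and $g_0^2=1/(m+1)$; the task is to show that no further real solution appears. The most promising line is to translate the system into Jacobi-sum identities via Theorems~\ref{t5} and~\ref{t7}, then combine the Weil bound $|J_q(\chi_m^s,\chi_m^t)|=\sqrt q$ with Stickelberger-type $p$-adic congruences on Gauss sums, together with the Davenport--Hasse lifting, to force admissible tuples $(g_0,\ldots,g_{m-1},h)$ into the already listed families.

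The main obstacle is that the real locus of $\hat L_{m,\theta}$ has no a priori uniform description as $m$ grows. Tables~\ref{tab1}--\ref{tab8} suggest that the elimination polynomial $F_{m,\theta}(x)$ has modest degree and that its non-trivial factors possess no real roots, but I see no inductive mechanism in $m$ that proves this. A concrete intermediate target is to show that $\hat L_{m,\theta}$ is zero-dimensional for every even $m$ and every divisor $\theta$ of $m/2$, and then to bound $\deg F_{m,\theta}$ uniformly in $m$ by exploiting the Galois action of $(\mathbb{Z}/m\mathbb{Z})^*$ on $\{\hat g_s\}$ inherited from the choice of $\chi_m$, as in Proposition~\ref{p5}(c). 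A complementary route is to apply multiplier-type theorems for difference sets, such as the Mann test or the Turyn exponent bound, to force arithmetic conditions on $q=mf+1$ that clash with Theorem~\ref{t9}. Experience with the classical literature of Lehmer, Whiteman, Muskat, Baumert, and Evans suggests that neither method alone will suffice for all $m\geq 24$, so finishing the conjecture will almost certainly require a genuinely new arithmetic ingredient beyond what is developed in this paper.
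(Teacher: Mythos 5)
The statement you are proving is Conjecture~\ref{conj1}: the paper itself offers no proof of it, only (i) the known cases --- $m$ odd via Theorem~\ref{t8}, even $m\leqslant 8$ via the classical results of Paley, Lehmer, Hall and Storer, and even $10\leqslant m\leqslant 22$ via Theorem~\ref{t11} --- and (ii) a proposed pathway for even $m\geqslant 24$, namely Conjecture~\ref{conj2} combined with Theorem~\ref{t10}. Your assembly of the known cases is correct and matches the paper's own summary in Subsection~\ref{sec1}, including the verification of (b$'$) inside the proof of Theorem~\ref{t8} and the Pell-equation parity constraints for the octic cases. Your reduction of the remaining cases, via Theorem~\ref{t10}(b), Theorem~\ref{t4}, Lemma~\ref{l5} and Proposition~\ref{p5}, to controlling the real locus of $\hat{L}_{m,\theta}$ for $\theta\mid m/2$ is exactly the paper's own (unproven) route; to this point you have reproduced the paper, not gone beyond it.

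The genuine gap is in the step you propose to close the conjecture for $m\geqslant 24$, and it is not merely incomplete but methodologically confused. Stickelberger-type $p$-adic congruences, the Davenport--Hasse lifting, and the evaluation $|J_q(\chi_m^s,\chi_m^t)|=\sqrt{q}$ are statements about \emph{actual} Gauss and Jacobi sums attached to a character on a specific $\mathbb{F}_q$; but the points of $L_m$ (equivalently of $\hat{L}_{m,\theta}$) are abstract solutions of a polynomial system with no prime $p$, field, or character attached --- Proposition~\ref{p4}(d) exhibits solutions with $g_0=m/2-1$ that correspond to no Gauss sums whatsoever. So these arithmetic tools cannot constrain an arbitrary point of $L_m\cap(\mathbb{R}^*\times(\mathbb{S}^1)^m)$, which is what Theorem~\ref{t10} requires; conversely, if you restrict attention to genuine Gauss sums, membership in $L_m$ is automatic by Theorem~\ref{t1} and you are back to the classical cyclotomic-number obstructions of Lehmer, Whiteman, Muskat, Baumert and Evans, which is precisely the laborious approach the paper was designed to circumvent. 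Your intermediate targets are likewise open in the paper: zero-dimensionality of $L_m$ for even $m\geqslant 24$ is posed there as a separate conjecture, and no uniform bound on $\deg F_{m,\theta}$ is known. To your credit you flag all of this honestly, but the upshot is that your proposal proves nothing beyond Theorems~\ref{t8}, \ref{t10} and \ref{t11}, i.e., it does not prove the statement --- which, being a conjecture, remains open in the paper as well.
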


We have seen that Conjecture \ref{conj1} is true for odd $m$ and even $m\leqslant22$. In fact, our verification for even values of $m$ up to $22$ other than $2$, $4$ or $8$ builds on the computation results that the assumptions of both (a) and (b) in Theorem \ref{t10} hold for these $m$'s. Viewing this, we address the following conjecture, whose latter part obviously implies the former.

\begin{conjecture}\label{conj2}
Suppose that $m$ is even and $m\neq2$, $4$ or $8$.
\begin{itemize}
\item[(a)] Each $(g_0,g_1,\dots,g_{m-1},h)\in L_m\cap(\mathbb{R}^*\times(\mathbb{S}^1)^m)$ satisfies $g_0^2\geqslant1/(m+1)$.
\item[(b)] Each $(g_0,g_1,\dots,g_{m-1},h)\in L_m\cap(\mathbb{R}^*\times(\mathbb{S}^1)^m)$ satisfies either $g_0^2\geqslant1$ or $g_0^2=1/(m+1)$.
\end{itemize}
\end{conjecture}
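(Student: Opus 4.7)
The plan is to translate the system defining $L_m$ into constraints on the DFT-level quantities of Theorem~\ref{t4} and extract sharp information about the real values of $g_0$ from them. Throughout, let $(g_0,g_1,\ldots,g_{m-1},h) \in L_m \cap (\mathbb{R}^*\times(\mathbb{S}^1)^m)$; by Lemma~\ref{l5} together with parts (a) and (c) of Proposition~\ref{p5}, it suffices to treat the $(\hat{g},\theta)$-level systems for $\theta$ ranging over divisors of $m/2$.

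The first step is the key observation that every DFT value has the same modulus. Parseval's identity yields $\sum_{s=0}^{m-1}|\hat{g}_s|^2 = m(g_0^2+m-1)$, while the first line of (\ref{21}) gives $|\hat{g}_s|\,|\hat{g}_{m/2+s}| = g_0^2+m-1$ for each $s = 0,\ldots,m/2-1$. Summing the AM--GM inequality $|\hat{g}_s|^2 + |\hat{g}_{m/2+s}|^2 \geq 2|\hat{g}_s|\,|\hat{g}_{m/2+s}|$ over $s$ and comparing with Parseval forces equality, so $|\hat{g}_s|^2 = R^2 := g_0^2+m-1$ for every $s$. Consequently $\hat{g}_{m/2+s} = \overline{\hat{g}_s}$, and writing $\hat{g}_s = R\,e^{i\phi_s}$ one gets $\phi_{m/2+s} \equiv -\phi_s \pmod{2\pi}$. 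The remaining equations of (\ref{21}) then reduce to a closed system of trigonometric identities in the half-angles $\phi_0,\ldots,\phi_{m/2-1}$, the real parameter $g_0$, and the discrete parameter $\theta$; for instance, $\sum_{t=0}^{m/2-1}\cos\phi_t = mg_0/(2R)$ and $\sum_{t=0}^{m/2-1}\cos 2\phi_t = m(g_0^2-1)/(2R^2)$.

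For part (a), the strategy is to combine the above trigonometric identities with those extracted from the third line of (\ref{21}) to force the lower bound $g_0^2 \geq 1/(m+1)$ via a Cauchy--Schwarz or positivity-of-squares estimate on trigonometric polynomials in the $\phi_s$. Guided by the fact that $\hat{L}_{m,\theta}$ is zero-dimensional in every case computed so far, an alternative is to prove zero-dimensionality in general and then study the univariate polynomial in $g_0$ obtained by Gr\"{o}bner elimination, bounding its real roots directly. Either way, one is trying to turn the softness of the analytic estimates above into a sharp lower bound by exploiting sufficiently many equations of the system simultaneously.

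The main obstacle is part (b), which asserts the considerably stronger dichotomy $g_0^2 \geq 1$ or $g_0^2 = 1/(m+1)$, ruling out any real solution with $g_0^2$ strictly between $1/(m+1)$ and $1$. A pure analytic estimate on (\ref{21}) seems too soft to detect this gap; I would instead attempt an arithmetic argument in the spirit of Case~2 of the proof of Theorem~\ref{t8}. Concretely, when $g_0^2$ is rational the $\hat{g}_s$ should lie in a controlled ring of cyclotomic integers, and one would argue that the resulting integrality congruences on $g_0^2$ are compatible only with the two special values predicted by the conjecture. The auxiliary variable $h = \chi_m(4)$ from the $g$-level system (invisible at the $(\hat{g},\theta)$-level beyond its encoding as $\zeta_{m/2}^\theta$) is likely to be decisive here, since it carries multiplicative information absent from the purely trigonometric picture and, through Proposition~\ref{p1}(d), ties together the Jacobi sum factorizations needed for the integrality argument. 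Converting the rigidity suggested by Tables \ref{tab1}--\ref{tab9} into a proof uniform in $m$ appears to be the central difficulty.
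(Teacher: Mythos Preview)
The statement you are trying to prove is \emph{Conjecture}~\ref{conj2}, and the paper does not prove it. The paper only verifies it computationally for even $m\leqslant22$ by computing Gr\"{o}bner bases of the systems on $(\hat{g},\theta)$-level and reading off the admissible values of $g_0$ from the polynomials $F_m(x)$ in Table~\ref{tab9}; for general even $m$ it is left open. So there is no ``paper's own proof'' to compare against.

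Your proposal is not a proof either: after the first paragraph it becomes a list of strategies and obstacles rather than an argument. That said, your opening observation is correct and is not in the paper. From $|g_t|=1$ for $t\neq0$ and Parseval one indeed gets $\sum_s|\hat{g}_s|^2=m(g_0^2+m-1)$, while the first line of \eqref{21} gives $\hat{g}_s\hat{g}_{m/2+s}=g_0^2+m-1$; the AM--GM argument then forces $|\hat{g}_s|^2=g_0^2+m-1$ for every $s$ and $\hat{g}_{m/2+s}=\overline{\hat{g}_s}$. This is a genuine simplification of the real locus $L_m\cap(\mathbb{R}^*\times(\mathbb{S}^1)^m)$ beyond what the paper extracts, and it does reduce the problem to a trigonometric system in $m/2$ phases.

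The gap is everything after that. Your plan for part~(a) (``Cauchy--Schwarz or positivity-of-squares'' on the trigonometric identities, or ``prove zero-dimensionality in general and bound the real roots of the eliminant'') is a wish, not a method: you have not identified which combination of equations produces the inequality $g_0^2\geqslant1/(m+1)$, and proving zero-dimensionality of $\hat{L}_{m,\theta}$ uniformly in $m$ is itself an open problem posed later in the paper. For part~(b) you correctly diagnose that a soft analytic bound cannot see the gap between $1/(m+1)$ and $1$, but the proposed ``integrality in cyclotomic integers'' argument has no content as stated: the $\hat{g}_s$ need not be algebraic integers (only $\sqrt{q}\,g_s$ are, for the arithmetic points coming from Gauss sums), and for a generic point of $L_m$ there is no reason $g_0^2$ should be rational at all. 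In short, the first step is a nice reduction, but the proposal does not advance beyond what the paper already leaves as a conjecture.
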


By the benefit of Theorem \ref{t10}, it provides a possible way to tackle Conjecture \ref{conj1} for higher powers by verifying Conjecture \ref{conj2} for larger $m$: if Conjecture \ref{conj2}(b) is true then Conjecture \ref{conj1} is true; if part (a) of Conjecture \ref{conj2} is true then at least the statement about $H_{q,m}$ in Conjecture \ref{conj1} holds. From the author's viewpoint, it is quite possible that, like the computation results in the previous subsection, each $(g_0,g_1,\dots,g_{m-1},h)\in L_m$ satisfies the inequalities in Conjecture \ref{conj2}. In other words, it would probably suffice to compute the Gr\"{o}bner basis for (\ref{22}) or (\ref{21}) for even $m\geqslant24$.

\subsection{Flag-transitive projective planes}\label{sec5}

A \emph{finite projective plane} of order $n$, where $n\in\mathbb{Z}_{>1}$, is a point-line incidence structure satisfying:
\begin{itemize}
\item[(i)] each line contains exactly $n+1$ points and each point is contained in exactly $n+1$ lines;
\item[(ii)] any two distinct lines intersect in exactly one point and any two distinct points are contained in exactly one line.
\end{itemize}
The incident point-line pairs are called \emph{flags}. A permutation on the point set preserving the lines and flags is called a \emph{collineation} or \emph{automorphism}. If the collineation group of a finite projective plane acts $2$-transitively on the points, then it is said to be \emph{$2$-transitive}. If the collineation group of a finite projective plane acts transitively on the flags, then it is said to be \emph{flag-transitive}. Note that $2$-transitive finite projective planes are always flag-transitive because two distinct points determine a line.

From the definition of difference sets one sees that each $(v,k,1)$-difference set $D$ in an abelian group $F$ gives rise to a finite projective plane once we call elements of $F$ points and $\{D+a\mid a\in F\}$ lines. For such consideration, $(v,k,1)$-difference sets are also called \emph{planar difference sets}. A finite projective plane coordinatized by a finite field is said to be \emph{Desarguesian} since Moufang revealed its equivalence to a certain configurational property named in honor of G. Desargues (see for example \cite{hughes1973projective}). An elegant and celebrated theorem of Wagner \cite{wagner1965finite} asserts that every finite $2$-transitive projective plane is Desarguesian, which actually classifies the $2$-transitive projective planes. Toward a generalization of Wagner's theorem, a conjecture was made that every finite flag-transitive projective plane is Desarguesian. This conjecture has received attention of wide scope and is still open as it is attributed to the existence problem of related planar difference sets by Proposition \ref{p2} below. For more about the history of this longstanding conjecture including Proposition \ref{p2}, see \cite{thas2003finite,thas2008finite}.

\begin{proposition}\label{p2}
If there exists a finite non-Desarguesian flag-transitive projective plane of order $m$ with $v$ points, then $v=m^2+m+1$ is prime and $H_{v,m}$ is a $(v,m+1,1)$-difference set in $\mathbb{F}_v$ with $m>8$.
\end{proposition}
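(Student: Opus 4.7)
My plan is to recognize this proposition as a consolidation of classical results on flag-transitive projective planes and to outline how each piece fits; the proof is essentially a citation of external theorems rather than an application of the Gauss-sum machinery built earlier in the paper.

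First, I would dispatch the parameter count: the projective plane axioms force $v = m^2 + m + 1$ points and $v$ lines, each line containing $m+1$ points, so the prospective difference set has $k = m+1$ and $\lambda = 1$, consistent with (\ref{11}). Since $\mathcal{P}$ is non-Desarguesian, Wagner's theorem rules out $2$-transitivity of the collineation group. I would then invoke the reduction theorem due to Kantor (with ingredients from Higman--McLaughlin, Ho, and Feit): if $\mathcal{P}$ is a flag-transitive finite projective plane that is not $2$-transitive, then its collineation group $G$ contains a sharply point-transitive cyclic normal subgroup $C$, and $v = |C|$ is prime. Identifying $C$ with the additive group $\mathbb{F}_v^+$ turns a single line into a subset $D \subseteq \mathbb{F}_v$ of size $m+1$ whose translates are the lines of $\mathcal{P}$, and the axiom that any two points lie on a unique line translates to the $(v, m+1, 1)$-difference set condition for $D$.

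To identify $D$ with $H_{v,m}$: since $v$ is prime, $G$ embeds into $\mathrm{AGL}(1, v) = \mathbb{F}_v \rtimes \mathbb{F}_v^*$, so the stabilizer $G_0$ of $0$ is a subgroup of the cyclic group $\mathbb{F}_v^*$ of order $m(m+1)$. The transitivity of $G_0$ on the $m+1$ lines through $0$ forces $|G_0|$ to be divisible by $m+1$; a line-stabilizer analysis (together with the multiplier theorem applied in this setting) then pins down $G_0$ as the unique subgroup of $\mathbb{F}_v^*$ of order $m+1$, namely $H_{v,m}$. The set $D$ is then a union of $G_0$-orbits in $\mathbb{F}_v$, and matching $|D| = m+1 = |H_{v,m}|$ yields $D = \alpha H_{v,m}$ for some $\alpha \in \mathbb{F}_v^*$; translating by $\alpha^{-1}$ (a collineation of $\mathcal{P}$) delivers $D = H_{v,m}$. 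The bound $m > 8$ follows from the complete classification of projective planes of order at most $8$: such planes exist only for $m \in \{2,3,4,5,7,8\}$ (with $m=6$ ruled out by Bruck--Ryser), and in each case the unique plane is Desarguesian, contradicting the hypothesis.

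The main obstacle, and the reason I would treat this as a citation rather than a self-contained proof, is Kantor's reduction theorem: its proof uses the classification of finite simple groups in an essential way and lies outside the analytic framework of this paper. I would accordingly refer the reader to \cite{thas2003finite,thas2008finite} for the complete argument, as the paper itself already does.
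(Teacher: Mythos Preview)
Your proposal is correct and matches the paper's treatment: the paper supplies no proof of Proposition~\ref{p2} at all, merely stating it and directing the reader to \cite{thas2003finite,thas2008finite} for the history and argument. Your sketch of the classical reduction (Wagner's theorem, Kantor's CFSG-based result forcing a cyclic Singer group of prime order, and the small-order classification yielding $m>8$) goes well beyond what the paper itself offers and is in the right spirit; the only place to be slightly more careful is the passage from ``$D$ is a union of $G_0$-orbits'' to $D=\alpha H_{v,m}$, since a line through $0$ is not fixed by the full point-stabilizer $G_0$ but rather by the line-stabilizer $G_D$---one usually argues via the multiplier group of $D$ instead---but you rightly flag this whole step as belonging to the cited literature.
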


An important concept in the theory of difference sets is the so called \emph{multiplier}. Its idea stems from Hall \cite{hall1947cyclic} when investigating the special case of planar difference sets in cyclic groups, and has been generalized to difference sets in an arbitrary group with a lot of outcomes (see the survey \cite{jungnickel1992difference} for example). Nevertheless, we will focus on the abelian group case for our purposes, where a (\emph{numerical}) \emph{multiplier} of a difference set $D$ in an abelian group $F$ is defined to be an integer $t$ with $\gcd(t,|F|)=1$ such that $tD=D+a$ for some $a\in F$. The following result is due to Chowla and Ryser \cite{chowla1950combinatorial}.

\begin{proposition}\label{p6}
Let $D$ be a $(v,k,\lambda)$-difference set in an abelian group. If $t$ is a prime divisor of $k-\lambda$ with $\gcd(t,v)=1$ and $t>\lambda$, then $t$ is a multiplier of $D$.
\end{proposition}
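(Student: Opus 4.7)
The plan is to work in the integral group ring $\mathbb{Z}[F]$, identifying $D$ with the formal sum $\sum_{d \in D} d$ and, for any integer $r$ coprime to $v$, writing $D^{(r)} = \sum_{d \in D} rd$. The difference set property translates into the identity $D \cdot D^{(-1)} = n + \lambda \widehat{F}$ in $\mathbb{Z}[F]$, where $\widehat{F} = \sum_{a \in F} a$ and $n = k-\lambda$. The goal is to produce a single element $a_0 \in F$ with $D^{(t)} = D + a_0$, which is precisely the multiplier condition.

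The first step will be a Frobenius-style congruence: since $(\sum a_i)^t \equiv \sum a_i^t \pmod t$ in any commutative ring, one has $D^t \equiv D^{(t)}$ in $\mathbb{Z}[F]/t\mathbb{Z}[F]$. Multiplying by $D^{(-1)}$, substituting $D \cdot D^{(-1)} = n + \lambda \widehat{F} \equiv \lambda \widehat{F} \pmod t$, and using $D \cdot \widehat{F} = k\widehat{F}$ together with Fermat's little theorem applied to $k$, this will yield
\[
D^{(t)} \cdot D^{(-1)} \equiv \lambda \widehat{F} \pmod t.
\]
Here $\gcd(k,t)=1$ is needed for the Fermat step, and it follows from $k \equiv \lambda \pmod t$ together with the hypothesis $t > \lambda$ (the degenerate $\lambda = 0$ case being trivial).

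The main step is upgrading this congruence to an exact equality. Writing $D^{(t)} \cdot D^{(-1)} = \sum_a c_a \cdot a$ with $c_a \in \mathbb{Z}_{\geqslant 0}$, the congruence gives $c_a \equiv \lambda \pmod t$, and the hypothesis $t > \lambda$ is used here precisely to force $c_a \geqslant \lambda$, so that $c_a = \lambda + tb_a$ with $b_a \geqslant 0$. I would then pin down the $b_a$ by two global counts. First, $\sum_a c_a = k^2 = v\lambda + t\sum_a b_a$ together with the arithmetic identity $v\lambda = k^2 - n$ yields $\sum_a b_a = n/t$. Second, since $D^{(t)}$ is itself a $(v,k,\lambda)$-difference set (multiplication by $t$ is an automorphism of $F$), a direct count of quadruples $(x_1,y_1,x_2,y_2)\in D^{(t)}\times D \times D^{(t)}\times D$ with $x_1 - y_1 = x_2 - y_2$ gives $\sum_a c_a^2 = k^2 + (v-1)\lambda^2$, which simplifies to $\sum_a b_a^2 = (n/t)^2$. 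Now $\sum b_a = n/t$ with $b_a \geqslant 0$ forces $b_a \leqslant n/t$, so $\sum b_a^2 \leqslant (n/t)\sum b_a = (n/t)^2$ with equality only when each $b_a \in \{0, n/t\}$; combined with the mass $\sum b_a = n/t$, there is exactly one $a_0$ with $b_{a_0} = n/t$ and $b_a = 0$ otherwise. Hence
\[
D^{(t)} \cdot D^{(-1)} = n \cdot a_0 + \lambda \widehat{F}.
\]

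The concluding step is pure character theory. For every nontrivial character $\chi$ of $F$, applying $\chi$ to $D \cdot D^{(-1)} = n + \lambda \widehat{F}$ gives $|\chi(D)|^2 = n$; applying $\chi$ to the displayed identity then yields $\chi(D^{(t)}) = \chi(a_0)\chi(D) = \chi(D + a_0)$, while the trivial character case is immediate from $|D^{(t)}| = |D + a_0| = k$. Fourier inversion over the dual group of $F$ then delivers $D^{(t)} = D + a_0$, i.e., $tD = D + a_0$. The main obstacle is really the combinatorial passage from the mod-$t$ congruence to the exact coefficient structure of $D^{(t)} D^{(-1)}$: the congruence alone permits many distributions of the $c_a$, and it is precisely the strict bound $t > \lambda$ that rules out the otherwise admissible negative residue representative and pins each $c_a$ to $\lambda + tb_a$ with $b_a \geqslant 0$, enabling the concentration argument above.
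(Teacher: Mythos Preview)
The paper does not supply its own proof of this proposition: it is stated as a cited result of Chowla and Ryser, with no proof environment following it. So there is nothing in the paper to compare your argument against.

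That said, your argument is correct and is essentially the classical Hall--Chowla--Ryser proof. The Frobenius congruence $D^t \equiv D^{(t)} \pmod t$, the reduction $D^{t-1}(n+\lambda\widehat F) \equiv \lambda k^{t-1}\widehat F \equiv \lambda\widehat F \pmod t$ via $t\mid n$ and Fermat, and the coefficient analysis forcing $c_a = \lambda + tb_a$ with $b_a\geqslant 0$ all go through exactly as you describe; the two global counts $\sum b_a = n/t$ and $\sum b_a^2 = (n/t)^2$ are computed correctly (the second via $\sum c_a^2 = k^2 + (v-1)\lambda^2$, using that $D^{(t)}$ is again a $(v,k,\lambda)$-difference set because $\gcd(t,v)=1$ makes $x\mapsto tx$ an automorphism of $F$). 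The concentration step $b_a\in\{0,n/t\}$ with a single nonzero index is the standard equality case, and the character/Fourier finish is clean since $|\chi(D)|^2=n\neq 0$ for every nontrivial $\chi$. The only place to be slightly careful is the side condition $\gcd(k,t)=1$: as you note, $k\equiv\lambda\pmod t$ with $0<\lambda<t$ gives it, and the residual case $\lambda=0$ forces $k\in\{0,1\}$ by $k(k-1)=\lambda(v-1)$, which is indeed trivial.
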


In \cite[\textsc{Theorem IV}]{lehmer1953residue}, Lehmer proved that the set of multipliers of a nontrivial cyclotomic difference set in $\mathbb{F}_p$ is the difference set itself. We note that this result can be extended to $\mathbb{F}_q$ along the the same lines of proof. Now suppose that $m$ is even and $H_{q,m}$ is a planar difference set. It follows that $f$ is odd by (\ref{12}), and thus the order $f-1$ is even. As Proposition \ref{p6} implies that $2$ is a multiplier of $H_{q,m}$, we then have $2\in H_{q,m}$, and so $\chi(4)=1$ for any multiplicative character $\chi$ of order $m$ on $\mathbb{F}_q$. This allows us to add the equation $h=1$ to (\ref{22}) in order that $H_{q,m}$ is a planar difference set in $\mathbb{F}_q$. Hence the following theorem holds as a consequence of (\ref{12}) and Proposition \ref{p2}.

\begin{theorem}
If $H_{q,m}$ is a planar difference set in $\mathbb{F}_q$, then $q=m^2+m+1$ with $m$ even and the system of order $m$ on $g$-level has a solution $(g_0,g_1,\dots,g_{m-1},h)$ in $\mathbb{R}\times(\mathbb{S}^1)^m$ with $g_0=-1/\sqrt{q}$ and $h=1$. In particular, if there does not exist even integer $m>8$ such that $v=m^2+m+1$ is prime and the system of order $m$ on $g$-level has a solution $(g_0,g_1,\dots,g_{m-1},h)$ in $\mathbb{R}\times(\mathbb{S}^1)^m$ with $g_0=-1/\sqrt{v}$ and $h=1$, then every finite flag-transitive projective plane is Desarguesian.
\end{theorem}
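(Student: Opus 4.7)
The proof splits naturally into the structural assertion about $H_{q,m}$ together with the system solvability, and then a contrapositive argument for the projective-plane consequence. My plan is to work through these in order, assembling pieces that the paper has already developed.

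First I would unpack what ``planar'' forces. A $(v,k,1)$-difference set has $\lambda=1$, so the necessary equation (\ref{12}) collapses to $f-1=m$, hence $f=m+1$ and $q=mf+1=m^2+m+1$. The order of $H_{q,m}$ equals $n=k-\lambda=m$, which is at least $2$ since a projective plane requires $m>1$; in particular $H_{q,m}$ is a nontrivial difference set, so Theorem \ref{t8}(a) rules out odd $m$ and forces $m$ to be even. With $m$ even I can invoke Theorem \ref{t1} verbatim, which hands me a solution $(g_0,g_1,\dots,g_{m-1},h)\in\mathbb{R}\times(\mathbb{S}^1)^m$ of (\ref{22}) with $g_0=-1/\sqrt{q}$.

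The one extra feature to pin down is $h=1$, equivalently $\chi_m(4)=1$. Here I would deploy the multiplier machinery. The order of $H_{q,m}$ equals $m$, which has $2$ as a prime divisor. Since $q=m^2+m+1$ is odd we have $\gcd(2,q)=1$, and plainly $2>1=\lambda$, so Proposition \ref{p6} (Chowla--Ryser) identifies $2$ as a multiplier of $H_{q,m}$. Invoking the $\mathbb{F}_q$-extension of Lehmer's multiplier characterization that is recorded immediately before the theorem, the multiplier set of $H_{q,m}$ coincides with $H_{q,m}$ itself, so $2\in H_{q,m}$. This gives $\chi_m(2)=1$ and hence $h=\chi_m^2(2)=1$, completing the direct implication.

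For the ``in particular'' clause I would argue contrapositively. Suppose some finite flag-transitive projective plane is not Desarguesian; by Proposition \ref{p2} its order $m$ must satisfy $m>8$, the point count $v=m^2+m+1$ is prime, and $H_{v,m}$ is a $(v,m+1,1)$-difference set in $\mathbb{F}_v$. Applying the direct implication just proved to $H_{v,m}$ produces an even integer $m>8$ with $v$ prime for which the system of order $m$ on $g$-level admits a solution in $\mathbb{R}\times(\mathbb{S}^1)^m$ with $g_0=-1/\sqrt{v}$ and $h=1$, violating the standing hypothesis. The main obstacle I anticipate is the multiplier step, specifically verifying the $\mathbb{F}_q$ analogue of Lehmer's theorem that the multiplier set of a nontrivial $H_{q,m}$ is $H_{q,m}$ itself; the paper states that Lehmer's original cyclotomic-number proof in \cite{lehmer1953residue} transports verbatim to general $q$, but this is exactly the point where one has to reinspect her argument. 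Everything else reduces to short algebraic bookkeeping against results already cited.
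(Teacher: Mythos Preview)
Your proposal is correct and follows essentially the same route as the paper: the paper's argument is the paragraph immediately preceding the theorem, which invokes (\ref{12}) to get $q=m^2+m+1$, applies Proposition~\ref{p6} (with $2\mid m=f-1$, $q$ odd, $2>\lambda=1$) to make $2$ a multiplier, uses the $\mathbb{F}_q$-extension of Lehmer's multiplier theorem to get $2\in H_{q,m}$ and hence $h=\chi_m(4)=1$, and then cites Proposition~\ref{p2} for the contrapositive. Your one addition---deriving that $m$ is even from Theorem~\ref{t8}(a) rather than simply assuming it as the paper's paragraph does---is a welcome clarification.
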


\subsection{Other problems on the systems of equations}

Computation results shows that $L_m$ is a finite set when $m\leqslant22$ is even and $m\neq2$, $4$ or $8$. We thus make another conjecture below. Its affirmative solution for each fixed $m$ will result in a conclusion by Theorem \ref{t1} that there exist at most finitely many $q$'s such that $q\equiv1\pmod{m}$ and either $H_{q,m}$ or $M_{q,m}$ is a difference set in $\mathbb{F}_q$ for this $m$.

\begin{conjecture}
$L_m$ is a finite set for $m\geqslant24$ even.
\end{conjecture}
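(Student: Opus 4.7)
The plan is to work via Theorem \ref{t4} and reduce the zero-dimensionality of $L_m$ to that of each $\hat{L}_{m,\theta}$: the map in Theorem \ref{t4} is, on the $g$-coordinates, a linear isomorphism (the inverse DFT) together with the discrete label $h = \zeta_{m/2}^\theta$, so $L_m$ is a finite union of affine-linear images of the $\hat{L}_{m,\theta}$ and therefore zero-dimensional iff each $\hat{L}_{m,\theta}$ is. By parts (a) and (c) of Proposition \ref{p5} combined with Lemma \ref{l5}, one may further restrict to $\theta$ dividing $m/2$, cutting the workload to $\tau(m/2)$ systems.

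Inside $\hat{L}_{m,\theta}$, I would introduce the invariant $S := \sum_{t=0}^{m-1} \hat{g}_t$ and set $A := S^2/m^2 + m - 1$. The first line of (\ref{21}) then reads $\hat{g}_s \hat{g}_{m/2+s} = A$ for $s = 0, \dots, m/2-1$. On the open stratum where no $\hat{g}_s$ vanishes, this expresses $\hat{g}_{m/2}, \dots, \hat{g}_{m-1}$ as Laurent rational functions of $\hat{g}_0, \dots, \hat{g}_{m/2-1}$ and $S$. Substituting into the remaining two lines of (\ref{21}), and recording the defining relation $S = \sum_t \hat{g}_t$, produces roughly $m$ quasi-polynomial constraints in $m/2 + 1$ unknowns, an overdetermined system that should cut the ambient dimension down to zero.

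The next step is an elimination (or Gr\"{o}bner-theoretic argument) producing a nontrivial univariate polynomial $F_{m,\theta}(S) \in \mathbb{C}[S]$ from this reduced system, playing the role of the experimentally observed polynomials in Tables \ref{tab1}--\ref{tab8}. Once $S$ is pinned to finitely many values, the $m/2$ convolution-type equations from the second line of (\ref{21}), together with the pairings $\hat{g}_s \hat{g}_{m/2+s} = A$, form a system of $m/2$ quadratics in the $m/2$ surviving variables, whose solutions a B\'ezout-type count bounds by $2^{m/2}$, yielding zero-dimensionality on the open stratum. Degenerate strata on which some $\hat{g}_s$ vanishes force $A = 0$, hence $S^2 = -m^2(m-1)$, and can be handled in a separate stratification in which each $\hat{g}_s$ is either $0$ or paired with a zero factor, again leaving only finitely many possibilities.

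The main obstacle I expect is proving uniformly in $m$ and $\theta$ that the eliminant $F_{m,\theta}(S)$ is nonzero, i.e.\ that the reduced ideal does not cut out a positive-dimensional component in the $S$-direction. The symmetries in parts (b) and (c) of Proposition \ref{p5}, together with the convolutional structure inherited from the DFT, impose many syzygies among the defining equations, and for exceptional $\theta \mid m/2$ the third line of (\ref{21}) may become a consequence of the first two, collapsing the expected overdetermination. A possible workaround is arithmetic rather than algebraic: the components of any tuple in $\hat{L}_{m,\theta}$ coming from a genuine $\mathbb{F}_q$ satisfy Stickelberger-type $p$-adic congruences on the corresponding Gauss sums and are therefore confined to a discrete arithmetic set. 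Translating this discreteness into a purely algebraic zero-dimensionality statement independent of $q$ and $p$ is the delicate point on which any uniform proof of the conjecture will ultimately rest.
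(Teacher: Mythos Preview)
The statement you are attempting is a \emph{conjecture} in the paper, not a theorem: the paper offers no proof and explicitly poses zero-dimensionality of $L_m$ for even $m\geqslant24$ as an open problem, motivated solely by the computational evidence for $m\leqslant22$ recorded in Tables~\ref{tab1}--\ref{tab9}. There is therefore no proof in the paper to compare your proposal against.

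Your proposal is itself not a proof but a strategy outline, and you have correctly identified its decisive gap. The reduction to the systems $\hat{L}_{m,\theta}$ via Theorem~\ref{t4}, the substitution $\hat{g}_{m/2+s}=A/\hat{g}_s$ on the open stratum, and the B\'ezout count once $S$ is pinned are all sound bookkeeping. But the entire argument hinges on the existence of a nontrivial eliminant $F_{m,\theta}(S)$, and you offer no mechanism for producing one uniformly in $m$ and $\theta$; the paper's tables show such polynomials exist for $m\leqslant22$ only by explicit Gr\"{o}bner computation, case by case. Your own caveat that the third line of (\ref{21}) may collapse for certain $\theta$ is exactly the danger: without controlling the rank of the reduced system, the overdetermination is only heuristic. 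The arithmetic detour through Stickelberger congruences constrains only those points of $\hat{L}_{m,\theta}$ that arise from genuine Gauss sums over some $\mathbb{F}_q$, not arbitrary complex solutions of (\ref{21}), so it cannot by itself yield zero-dimensionality of the variety. In short, your proposal is a reasonable plan of attack on an open problem, but the central step remains conjectural, which is consistent with the paper's own assessment.
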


Studying whether there exists $(g_0,g_1,\dots,g_{m-1},h)\in L_m$ with $g_0=0$ is of interest and importance as well. One of the reasons is that it also has connection with finiteness results by the next theorem.

\begin{theorem}
Suppose that $m$ is even. If $L_m\cap(\{0\}\times(\mathbb{S}^1)^m)$ is empty, then there exist at most finitely many $q$'s such that $q\equiv1\pmod{m}$ and either $H_{q,m}$ or $M_{q,m}$ is a difference set in $\mathbb{F}_q$.
\end{theorem}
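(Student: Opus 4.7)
The plan is a straightforward compactness argument, contrapositive to the statement. Suppose to the contrary that there are infinitely many $q$ with $q\equiv1\pmod m$ for which $H_{q,m}$ or $M_{q,m}$ is a difference set in $\mathbb{F}_q$. I will extract a limit point of the corresponding solutions of (\ref{22}) that lies in $L_m\cap(\{0\}\times(\mathbb{S}^1)^m)$, contradicting the hypothesis.

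First I would invoke Theorem \ref{t1}: for each such $q$, there is a point
$$P_q=(g_0^{(q)},g_1^{(q)},\dots,g_{m-1}^{(q)},h^{(q)})\in L_m\cap(\mathbb{R}\times(\mathbb{S}^1)^m)$$
with $g_0^{(q)}$ equal to either $-1/\sqrt q$ or $(m-1)/\sqrt q$. In either case $|g_0^{(q)}|\leqslant(m-1)/\sqrt q$, so $g_0^{(q)}\to0$ as $q\to\infty$. Thus the sequence $\{P_q\}$ eventually lies in the compact set $[-1,1]\times(\mathbb{S}^1)^m\subset\mathbb{C}^{m+1}$.

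Next, since the constraint $h^{m/2}=1$ forces $h^{(q)}$ to take values in the finite set of $(m/2)$-th roots of unity, the pigeonhole principle lets me pass to a subsequence on which $h^{(q)}$ is constant, equal to some fixed root $h^\infty\in\mathbb{S}^1$. Applying Bolzano--Weierstrass to the remaining coordinates in the compact torus $(\mathbb{S}^1)^m$, I can extract a further subsequence along which $(g_1^{(q)},\dots,g_{m-1}^{(q)})$ converges to some $(g_1^\infty,\dots,g_{m-1}^\infty)\in(\mathbb{S}^1)^m$. Together with $g_0^{(q)}\to0$, this yields a limit point $P_\infty=(0,g_1^\infty,\dots,g_{m-1}^\infty,h^\infty)\in\{0\}\times(\mathbb{S}^1)^m$.

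Finally I would use the fact that $L_m$ is an affine variety, hence a Zariski-closed and in particular Euclidean-closed subset of $\mathbb{C}^{m+1}$. Since each $P_q\in L_m$ and $P_q\to P_\infty$, continuity of the defining polynomials forces $P_\infty\in L_m$. Therefore $P_\infty\in L_m\cap(\{0\}\times(\mathbb{S}^1)^m)$, which contradicts the assumption that this intersection is empty, and the theorem follows. There is no real obstacle here; the only point that warrants care is the passage from $h^{m/2}=1$ (a discrete constraint) to a convergent subsequence of $h^{(q)}$, which is handled cleanly by the pigeonhole observation above.
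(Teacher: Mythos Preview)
Your proof is correct and follows essentially the same compactness argument as the paper: invoke Theorem \ref{t1} to obtain points of $L_m$ with $g_0\to 0$, extract a convergent subsequence in the compact set $(\mathbb{S}^1)^m$, and use that $L_m$ is cut out by polynomials (hence closed) to conclude the limit lies in $L_m\cap(\{0\}\times(\mathbb{S}^1)^m)$. The only cosmetic differences are that you treat $H_{q,m}$ and $M_{q,m}$ simultaneously and insert a (harmless but unnecessary) pigeonhole step on $h$, while the paper handles the two cases in turn and simply applies compactness to all of $(\mathbb{S}^1)^m$ at once; note also the small slip that $(g_1^\infty,\dots,g_{m-1}^\infty)$ lives in $(\mathbb{S}^1)^{m-1}$, not $(\mathbb{S}^1)^m$.
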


\begin{proof}
Let $Q$ be the set of prime powers $q$ such that $q\equiv1\pmod{m}$ and $H_{q,m}$ is a difference set in $\mathbb{F}_q$. For each $q\in Q$, there exists a $(g_0(q),g_1(q),\dots,g_{m-1}(q),h(q))\in L_m\cap(\mathbb{R}\times(\mathbb{S}^1)^m)$ such that $g_0(q)=-1/\sqrt{q}$ by Theorem \ref{t1}. Suppose $Q$ to be infinite. Then there exists an infinite increasing sequence $(q_n)_{n=1}^\infty$ in $Q$. As $(\mathbb{S}^1)^m$ is bounded and closed, it is compact by the Heine-Borel theorem. Hence the sequence $((g_1(q_n),\dots,g_{m-1}(q_n),h(q_n)))_{n=1}^\infty$ in $(\mathbb{S}^1)^m$ has a subsequence
$$
((g_1(q_{n_k}),\dots,g_{m-1}(q_{n_k}),h(q_{n_k})))_{k=1}^\infty
$$
which has a limit point, say $(g_1,\dots,g_{m-1},h)\in(\mathbb{S}^1)^m$. Since
$$
g_0=\lim\limits_{k\rightarrow\infty}g_0(q_{n_k})=\lim\limits_{k\rightarrow\infty}-\frac{1}{\sqrt{q_{n_k}}}=0
$$
and $(g_0(q_{n_k}),g_1(q_{n_k}),\dots,g_{m-1}(q_{n_k}),h(q_{n_k}))$ satisfies (\ref{22}) for every integer $k\geqslant1$, taking the limit $k\rightarrow\infty$ in each polynomial equation of (\ref{22}) we deduce that $(g_0,g_1,\dots,g_{m-1},h)$ satisfies (\ref{22}). This shows that $(g_0,g_1,\dots,g_{m-1},h)\in L_m\cap(\{0\}\times(\mathbb{S}^1)^m)$, contrary to the assumption of the theorem. Consequently, $Q$ is finite. Along similar lines one can prove the finiteness of the set of prime powers $q$ such that $q\equiv1\pmod{m}$ and $M_{q,m}$ is a difference set in $\mathbb{F}_q$. Thus the theorem is true.
\end{proof}

For the values of $m$ in Table \ref{tab9}, $L_m\cap(\{0\}\times\mathbb{C}^m)$ is nonempty only when $m=10$ or $18$. Hence we would like to ask:

\begin{question}
For which even $m$'s is $L_m\cap(\{0\}\times\mathbb{C}^m)$ nonempty?
\end{question}

\bigskip
\noindent\textsc{Acknowledgements.}
The author would like to thank the anonymous referee for the careful reading and helpful suggestions.

%\bibliography{DifferenceSets}
%\bibliographystyle{plain}

\end{document}